\documentclass[graybox]{article}

\usepackage{graphicx}        

\usepackage{amsmath}
\usepackage{amsfonts}
\usepackage{amssymb}
\usepackage{graphicx}
\usepackage{enumerate}
\usepackage{bbm}
\usepackage{bm}

\usepackage{graphicx}
\usepackage{bbm}
\usepackage{bm}
\usepackage{graphics}
\usepackage{color}
\usepackage{cite}

\newtheorem{theorem}{Theorem}
\newtheorem{lemma}[theorem]{Lemma}
\newtheorem{definition}[theorem]{Definition}
\newtheorem{conjecture}[theorem]{Conjecture}
\newtheorem{proposition}[theorem]{Proposition}
\newenvironment{proof}[1][Proof]{\textbf{#1.} }{\ \rule{0.5em}{0.5em}}

\begin{document}

\title{Towards direct $L^2$-bounds for maximal partial sums of Walsh--Fourier series: The case of dyadic partial sums}
\author{Joseph D. Lakey\footnote{New Mexico State University, {\tt jlakey@nmsu.edu}}}
%
%
\maketitle

\abstract{
We outline an approach to obtain direct $L^2$ estimates not requiring interpolation for so-called linearized partial sums operators associated with expansions in Walsh functions. We focus specifically on a simpler case of dyadic partial sums but also outline a second approach  to proving bounds on general linearized partial sums.
}


\section{Introduction and background}


\subsection{Convergence of Fourier series}
The study of pointwise convergence of Fourier series eventually culminated in the much-celebrated Carleson--Hunt theorem
\cite{carleson_1966,hunt_1968}, cf., \cite{fefferman_ae_1973,mozzochi_1971,meljbro_1982} and \cite{lacey_2004,arias_2002} for later perspectives.
The result led to further investigations addressing convergence of Fourier series and integrals in higher dimensional Euclidean settings, e.g., \cite{carbery_1988,Antonov_1996,antonov_multiple,mastylo_2018} and more recently \cite{lacey_thiele_2000,muscalu_2006};
convergence in topological group settings, e.g.,
\cite{hunt_taibleson_1971,stanton_tomas_1976,colzani_1990}  and more recently \cite{persson_2022,oniani_2020,areshidze_2025,gat_2009};
spaces on which maximal partial sum operators are bounded, e.g., \cite{lie_2017,souza_1984} and convergence of expansions in other orthogonal functions, e.g.,
\cite{moricz_1994,guadalupe_1992,badkov_1974,kita_1985}
including orthogonal polynomials or special functions, e.g., \cite{chen_1994,ciaurri_1999}; and other related questions.

The concept of lacunary  partial sums\footnote{Here lacunarity refers to indices of a (sub) sequence $S_{n_k}$ of partial sums---not to the terms themselves} has provided corresponding convergence results in each of these settings, including
\cite{oniani_2020,areshidze_2025,antonov_lacunary,plinio_2015,bailey_2014} and others.
In 1924  Kolmogorov \cite{Kolmogoroff1924} established almost-everywhere convergence of lacunary partial sums of Fourier series in $L^2[0,1]$ by means of comparison with
 Ces\`aro means. That approach did not consider any norm bound on a maximal partial sum operator.  
 
 In this work we consider dyadic partial sums $S_{2^N}$ of expansions in Walsh functions.  Almost-everywhere convergence of Walsh--Fourier series of functions in $L^2[0,1]$ was established by Billard \cite{billard19661967} shortly after Carleson's work appeared (cf., Hunt \cite{hunt_walsh_1971}). It can be regarded as the first extension of Carleson's methods to a different setting.   The fact that Walsh functions are characters of the Cantor group allowed Gosselin \cite{gosselin1973} to extend Carleson's approach to Vilenkin groups that have a parallel structure. Subsequent work by Gosselin \cite{gosselin1979} extended C.~Fefferman's approach  to almost everywhere convergence of Fourier series in \cite{fefferman_ae_1973} to the Walsh setting.  The script was flipped starting with Thiele's work in 2000 \cite{thiele2000} when the Walsh setting was seen to provide a somewhat cleaner context for a phase space (wavepacket) approach to questions of boundedness of a family of operators that included partial sum operators and certain multilinear singular integral operators, cf., Muscalu et al.~\cite{muscalu_etal_walsh_2004}.  The approach  to  uniform boundedness of linearized partial sum operators outlined here is fundamentally different from
 prior approaches and uses special properties of Walsh functions.

Our goal here is in one sense much more modest than the aforementioned works: we seek concrete bounds on operator norms of a restricted family of \emph{dyadic} partial sum operators in the Walsh setting. On the other hand the ultimate goal (not achieved here) is a sharp uniform $L^2\to L^2$ bound on this family of operators, which is somewhat new in the study of Fourier series where maximal partial sum bounds are initially established on a different space ($L(\log_+L)^{1+\delta}$ in Carleson's work) with $L^2$ bounds following by interpolation. The program at hand seeks a direct $L^2$ bound on dyadic partial sums, using
 optimization methods to identify families of matrices corresponding to  dyadic partial sums that should have maximal norms.  
We outline very briefly at the end a different \emph{dilation} approach that should also extend to provide
direct $L^2$ estimates for general (not necessarily dyadic) maximal partial sums of Walsh expansions.

In the case of dyadic partial sums of Walsh--Fourier series we conjecture an explicit optimal bound on a class of linearized
dyadic partial sum operators, provide arguments for the plausibility of the conjecture and support these arguments with
concrete numerical evidence.  The partial sum estimates are applied to  dyadic step functions that are constant
on dyadic intervals $\left[k/2^N,(k+1)/{2^N}\right)$, $k=0,\dots, 2^N-1$.
 This allows rephrasing 
boundedness of partial sum operators in terms of boundedness of a family of what we refer to as (dyadically) truncated Walsh--Hadamard (DTWH) matrices.

Here is an outline of the presentation. In Sect.~\ref{sect:walsh} we review the Walsh functions and show that on dyadic step functions on $[0,1]$, certain linearized Walsh--Fourier (dyadic) partial sum operators can be represented in terms of 
DTWH matrices.  We then state a conjecture (Conj.~\ref{thm:uniform_bound}) that would provide a sharp bound on norms of maximal dyadic partial sum operators as well as a second (looser) conjecture  (Conj.~\ref{conj:uniform_bound_general}) on uniform norm bounds for  linearized Walsh--Fourier partial sum operators that are not-necessarily dyadic.  In Sect.~\ref{sect:properties} we formalize a secondary conjecture (Conj.~\ref{claim:one_node})   based on \emph{branching} properties of the sets of columns of  DTWH matrices that would lead to a proof of Conj.~\ref{thm:uniform_bound}.  In Sect.~\ref{sect:evidence} we state a specific case (Conj.~\ref{claim:bnk_opt}) of Conj.~\ref{claim:one_node}, outline a reduction to this special case,  and provide evidence of its validity.  In Sect.~\ref{sect:rigorous} we outline a path to fill in further details needed to prove Conj.~\ref{claim:bnk_opt} from which the main conjecture, Conj.~\ref{thm:uniform_bound}, would follow. Finally, in Sect.~\ref{sect:discussion} we discuss general truncations 
and outline very broadly steps that are needed to prove Conj.~\ref{conj:uniform_bound_general}. A proposition supporting 
Conj.~\ref{claim:bnk_opt} that is stated in Sect.~\ref{sect:evidence}
is proved in Appendix.~\ref{appendix:proof_Fcritical}.

When referring to the norm of a matrix, we mean its operator norm (largest singular value).  For specific symmetric matrices whose largest (in absolute value) eigenvalues happen to be positive, we will refer to corresponding eigenvectors as \emph{norm eigenvectors} (and the corresponding eigenvalue as the norm eigenvalue, or simply the norm).

\section{Partial sums of  Walsh--Fourier series \label{sect:walsh}}

\subsection{Walsh functions}

\bigskip
 In the \emph{Paley ordering} used here (cf., \cite{thiele2000}), the $n$th Walsh function $W_n(t)$, $n\in\mathbb{N}$, is defined recursively by
\begin{eqnarray*}
W_0(t)&=&1,\quad 0\leq t<1\\
W_{2n}(t)&=& \begin{cases} W_n(2t),&0\leq t<1/2\\
 W_n(2t-1),& \quad 1/2\leq t<1 \end{cases} \\
W_{2n+1}(t)&=& \begin{cases}  W_n(2t), & \quad 0\leq t<1/2\\
 -W_n(2t-1),&\quad 1/2\leq t<1\ \end{cases} 
\end{eqnarray*}
The Walsh functions can also be expressed in terms of Rademacher functions $r_k(t)={\rm sign}\, \sin(2^k\pi t)$ by $W_n(t)=\prod_{k=0}^\infty r_k(t)^{n_k}=(-1)^{\sum_{k=0}^\infty n_k t_{k+1}}$
where $n=\sum n_k 2^k$ ($n_k\in\{0,1\}$) is the binary decomposition of $n$ and $t=\sum_{k=1}^\infty t_k 2^{-k}$ ($t_k\in\{0,1\}$) is the dyadic decomposition of $t\in [0,1)$ (terminating in zeros if $t$ is a dyadic rational).
The Walsh functions form an orthonormal basis for $L^2[0,1]$.

\subsection{Walsh--Hadamard matrices and finite-dimensional representations of dyadic step functions}

As a basis for dyadic step functions up to level $N$, that is, functions constant on intervals $\left[\frac{k}{2^N},\frac{k+1}{2^N}\right)$ ($0\leq k<2^N$), the first $2^N$ Walsh functions  can be represented by columns of certain matrices of size $2^N\times 2^N$ that we refer to as \emph{Walsh--Hadamard matrices}.  Define ${\emph WH}_N$ recursively as follows. For $N=1$ let ${\emph WH}_1$ be the Haar matrix, 
$H=\frac{1}{\sqrt{2}} \left(\begin{matrix} 1& 1\\ 1& -1\end{matrix}\right)$. For $N>1$ we define ${\emph WH}_N$ as the $2^N\times 2^N$ matrix whose first $2^{N-1}$ rows form the matrix ${\emph WH}_{N-1}\otimes [1,1]/\sqrt{2}$, the Kronecker product of ${\emph WH}_{N-1}$ with $[1,1]/\sqrt{2}$, and 
whose last $2^{N-1}$ rows form the matrix ${\emph WH}_{N-1}\otimes [1,-1]/\sqrt{2}$, the Kronecker product of ${\emph WH}_{N-1}$ with $[1,-1]/\sqrt{2}$.  The matrix  ${\emph WH}_5$ is shown in Fig.~\ref{fig:dtwh_example}.
In comparison, the standard Hadamard matrices are defined as the $N$-fold Kronecker products of the Haar matrix.
Recall that $A\otimes B=\left(  \begin{matrix} a_{11} B & \cdots & a_{1n} B\\
\vdots & \ddots & \vdots \\
a_{m1} B& \cdots & a_{mn} B \end{matrix}\right)$.

The entries ${\emph WH}_N(k,n)$ of ${\emph WH}_N$ are the  normalized samples $2^{-N/2} W_n(t)$ of the Walsh functions $W_n(t)$ ($n=0,\dots, 2^N-1$) at the points $t=k/2^N$ ($k=0,\dots, 2^N-1$).  The first $2^N$ columns of ${\emph WH}_{N+1}$ are then the normalized samples of the first $2^N$ Walsh functions  at  twice the \emph{critical} rate of $2^N$ needed to distinguish the first $2^N$ Walsh functions.

Let $\mathcal{D}_N$ denote the dyadic step functions at level $N$  that are constant on the dyadic intervals  $I_{N,k}= \left[\frac{k}{2^N},\frac{k+1}{2^N}\right)$, $k=0,\dots, 2^N-1$. For $f\in\mathcal{D}_N$ one can write $f=2^{N/2}\sum_{k=0}^{2^N-1} c_k \mathbf{1}_{I_{N,k}}$. For $g\in L^2[0,1]$, the projection $P_N(g)=2^{N/2}\sum_{k=0}^{2^N-1} c_k(g) \mathbf{1}_{I_{N,k}}$ of $g$ on $\mathcal{D}_N$ is obtained by taking 
$c_k(g)=2^{N/2}\int_{I_{N,k}} g=\langle g, 2^{N/2} \mathbf{1}_{I_{N,k}}\rangle$.  
This normalization results in $c_\ell=\delta_{k,\ell}$ when $g= 2^{N/2} \mathbf{1}_{I_{N,k}}$, which has unit norm in $L^2[0,1]$. The Walsh functions $W_n(t)$, $n=0,\dots, 2^{N}-1$ form an orthonormal basis for $\mathcal{D}_N$. For  $f=2^{N/2}\sum_{k=0}^{2^N-1} c_k \mathbf{1}_{I_{N,k}}\in \mathcal{D}_N$, one can write
\[\langle f, W_n\rangle = ({\emph WH}_N^T \mathbf{c})_n, \quad n=0,\dots 2^N-1
\]
where $\mathbf{c}=[c_0,\dots, c_{2^N-1}]^T$.  The Walsh--Fourier expansion 
\[f(t)=\sum_{n=0}^{2^{N}-1} \langle f, W_n\rangle W_n(t)=\sum_{n=0}^{2^{N}-1}  (({\emph WH}_N)^T \mathbf{c})_n W_n(t)
\]
of this $f$ evaluated at $t=k/N$ yields
\[c_k =2^{-N/2}\sum_{k=0}^{2^{N}-1} \langle f, W_n\rangle W_n\left(\frac{k}{2^N}\right)
= (( {\emph WH}_N^T\mathbf{c})^T {\emph WH}_N)_k \, .
\]

\subsection{Partial sum operators and truncated WH-matrices} C. Fefferman's approach to almost everywhere convergence of Fourier series involved
proving a bound of the form $\sup_\Phi \|S_\Phi f\|_{L^1}\leq C\|f\|_{L^2}$ on $[0,1]$ where $(S_\Phi f)(x)=\sum_{n=0}^{\Phi(x)-1} \widehat{f}[n] e^{2\pi i nt}$ and $\Phi:[0,1]\to\mathbb{N}_+$ is a measurable, positive integer-valued \emph{truncation map}. The operator 
$S_\Phi$ is linear and is called a \emph{linearized partial sum operator}.   One can define analogues in the Walsh setting, namely
\begin{equation} \label{eq:sphi} (S_\Phi f)(t)=\sum_{n=0}^{\Phi(t)-1}  \langle f, W_n\rangle W_n(t)\, .
\end{equation}
We will restrict to bounded truncation maps $\Phi$. 
Since $\langle f,\, W_n\rangle=0$ when $f\in\mathcal{D}_N$ and $n>2^N$, by (\ref{eq:sphi}), when operating on $\mathcal{D}_N$
one can assume that $\Phi$ is bounded by $2^N$. However, $S_\Phi$ need not preserve $\mathcal{D}_N$ if $\Phi$ is not constant on the intervals
$I_{N,k}$.
Thus we set  $\Phi_N(k)=\min\{2^N,\langle \Phi, 2^{N/2}\mathbf{1}_{N,k}\rangle\}$ which has domain  in $\{0,\dots, 2^N-1\}$ (indices of columns of a $2^N\times 2^N$ matrix) and range in $\{1,\dots, 2^N\}$ (number of nonzero rows in a given column). 
It then suffices to evaluate $S_{\Phi_N} f$ at points $k/2^N$.
 Thinking of $k$ as indexing the rows of ${\emph WH}_N$, for $f=2^{N/2}\sum_{k=0}^{2^N-1} c_k \mathbf{1}_{I_{N,k}}$ 
one can write
\[(S_{\Phi_N} f)\left(\frac{k}{2^N}\right) =\sum_{n=0}^{\Phi_N(k)-1}2^{-N/2} W_n\left(\frac{k}{2^N}\right)  \langle f, W_n\rangle 
=(W_{N,\Phi}^T {\it WH}_N^T \mathbf{c})_k\, 
\]
where the matrix $W_{N,\Phi} $ is defined by
\[ W_{N,\Phi}(n,k)=\begin{cases} {\emph WH}_N(n,k) & n< \Phi_N(k)\\ 0 & n\geq \Phi_N(k) \end{cases}
\]
We refer to $W_{N,\Phi} $ as a \emph{truncated Walsh--Hadamard} (TWH) matrix in which, for each column $k$, the entries
are set to zero for any row index larger than $\Phi_N(k)$.  We have somewhat artificially reversed rows and columns here, in comparison with $\emph{WH}_N$,
to facilitate ensuing discussion of concepts involving height or depth of entries.
When it is clear from context that $N$ is fixed and that the truncation map $\Phi$ is in $\mathcal{D}_M$, $\Phi\leq 2^M$ with $M\leq N$ so that the local average $\Phi_N=\Phi$,  we will simply write $W_{\Phi} $. We will refer to the number $\Phi(k)$ of nonzero entries in a column $g_k$ of a ${\rm TWH}$ matrix $W_\Phi$
as the \emph{truncation length} of $g=g_k$, denoted $\ell(g)$  (we assume $\ell(g)\geq 1$). 
To distinguish rows from columns we will write $(g)_n$ for the entry in the $n$th row of column $g$ and will write
$g_k$ when referring to the index $k$ of a column.
If,  in fact,   $\Phi\in\mathcal{D}_N$ and $\|\Phi\|_\infty\leq 2^N$
then the range of $S_\Phi$ in  (\ref{eq:sphi})  is  contained in $\mathcal{D}_N$ and
 $S_\Phi$ maps $\mathcal{D}_N$ to itself as a subspace of $L^2[0,1]$.

\begin{figure}[ht]
\begin{center}
\noindent
 \includegraphics[width=12cm,height=4cm]{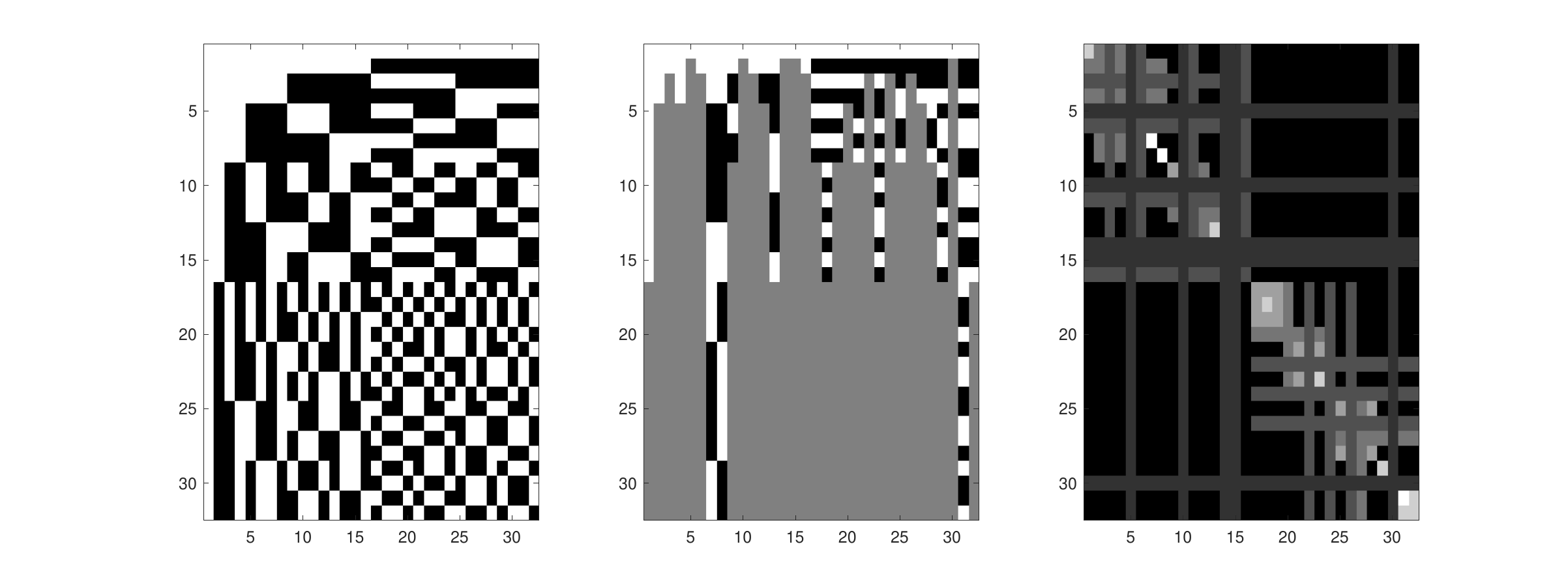}
\caption{\label{fig:dtwh_example} Left: Walsh--Hadamard matrix $\emph{WH}_N$ of size $32\times 32$ ($N=5$). Middle: DTWH matrix
of size $32\times 32$. Column truncation length $\ell=2^r$ where  $r\in \{0,\dots, 5\}$ is uniformly randomly generated.
Right: Correlation matrix of DTWH matrix in the middle}
\end{center}
\end{figure}

A consequence of the Carleson--Hunt theorem in the case of Walsh functions, expressed in C. Fefferman's terms, is that
$\sup_\Phi \|S_\Phi\|_{L^2\to L^2}$ is finite (C. Fefferman actually proved $\sup_\Phi \|S_\Phi\|_{L^2\to L^1}$ is finite). However, no existing method provides a direct bound on this quantity. Our goal in what follows is to specify such a bound over a very special subclass of functions $\Phi$ and outline a justification for the bound. Denote by $2^{\mathbb{N}}=\{1,2,4,8,\dots\}$, the nonnegative integer powers of two.

\begin{conjecture} \label{thm:uniform_bound} Let $D$ denote the collection of measurable functions $\Phi:[0,1]\to 2^{\mathbb{N}}$ such that $\Phi\in\mathcal{D}_M$ and $\Phi\leq 2^M$ for some $M\in\mathbb{N}$.  For $S_\Phi$ defined as in (\ref{eq:sphi}) one has 
$\sup_{\Phi\in D}\|S_\Phi\|_{L^2\to L^2}\leq 1+\frac{\sqrt{2}}{2}$.
\end{conjecture}

The bound of the conjecture provides the (known) almost everywhere convergence of dyadic partial sums of Walsh--Fourier series on $L^2[0,1]$ by an argument similar to the one given by C. Fefferman in \cite{fefferman_ae_1973}. The bound  $1+\frac{\sqrt{2}}{2}$ arises through specific TWH matrices that we will call \emph{standard truncations} 
of size $2^N\times 2^N$, denoted $W_N^{\rm opt}$ and described below.  The corresponding TWH matrix when $N=5$ is the left matrix in Fig.~\ref{fig:opt_biopt_N5}. The norm of the standard truncation as a function of $N$ is plotted in Fig.~\ref{fig:optnorms}. 

\begin{figure}[ht]
\begin{center}
\noindent
 \includegraphics[width=6cm,height=4cm]{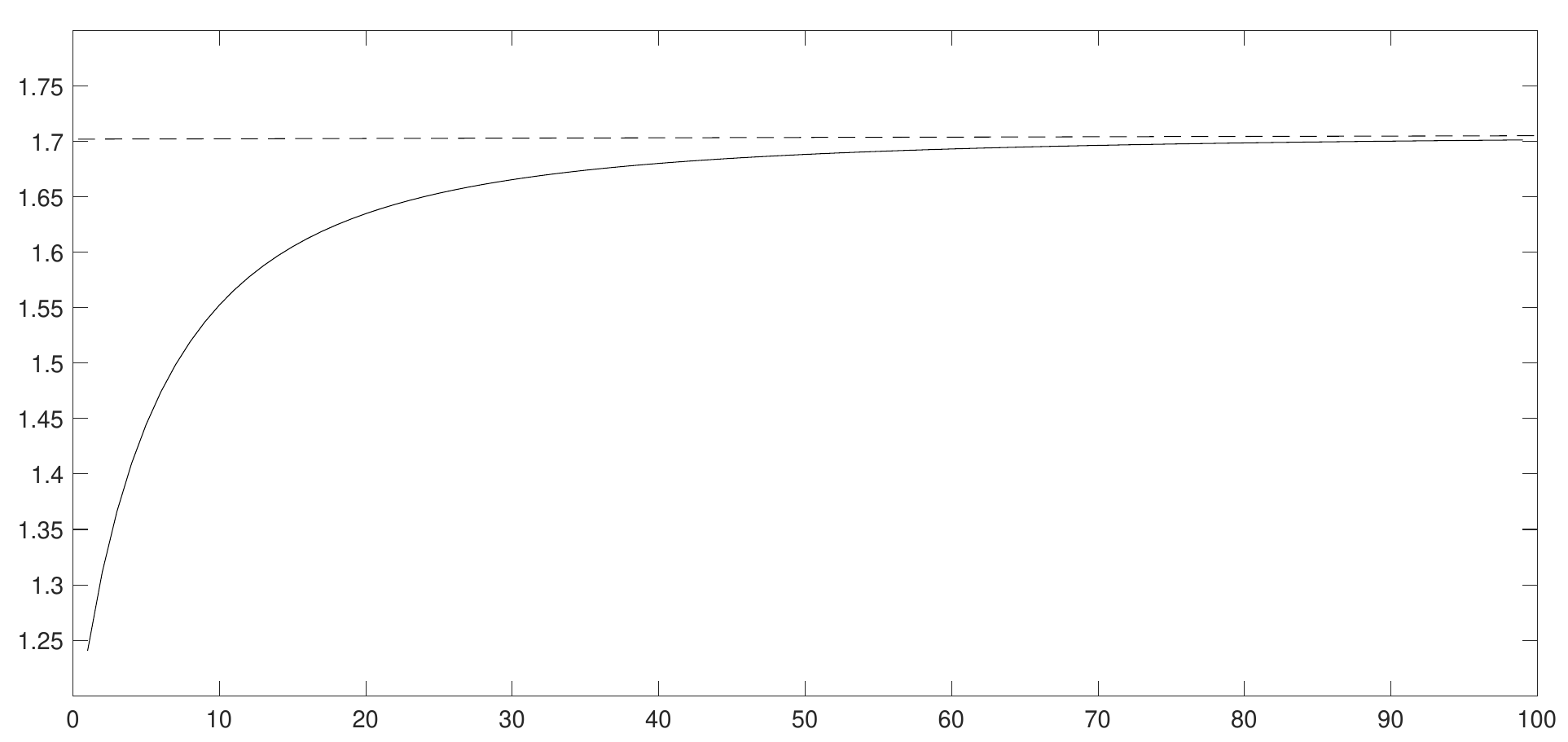}
\caption{\label{fig:optnorms} Norm of the standard truncation matrix $W_N^{\rm opt}$ as a function of $N$. The limiting value as $N\to \infty$ appears to equal
$1+\frac{\sqrt{2}}{2}=1.7071\dots$}
\end{center}
\end{figure}

Since $S_{\Phi_N}$ is represented by $W_{N,\Phi}^T{\it WH}_N^T $ and since $\|{\it WH}_N\|=1 $, one has $\|S_{\Phi_N}\|\leq \|W_{N,\Phi}\|$ where $\|\cdot\|$ is the operator norm on the appropriate $L^2$ space. Implicit in the statement of the conjecture are the facts that dyadic step functions are dense in $L^2[0,1]$, that every dyadic step function is in $\mathcal{D}_N$ for $N$ sufficiently large (depending on $f$), and that $S_\Phi f =S_{\Phi_N} f$ when $N$ is large enough and $f\in\mathcal{D}_N$.  The density of dyadic step functions and the fact that a bounded linear operator on a dense subspace extends to a bounded linear operator on the full space are the only aspects of this work that do not reduce in one way or another to finite dimensional matrix theory (and calculus).  Conjecture \ref{thm:uniform_bound} then follows from dominating, in a suitable sense, any 
$S_{\Phi_N}\sim W_{N,\Phi}^T{\it WH}_N^T $, $\Phi_N\in D$, by replacing $\Phi_N$
 by a subtruncation of the standard truncation which we denote as $\Phi_N^{\rm opt}$ defined as follows: 
$\Phi_N^{\rm opt}(0)=2^N$, $\Phi_N^{\rm opt}(1)=2^{N-1}$
and $\Phi_N^{\rm opt}(k)=2^{N-K}$ if $k\in\{2^{K-1},\dots, 2^K-1\}$.  The bound on $\|S_\Phi\|$ is stated
as a conjecture but we provide numerical evidence supported by heuristic arguments that make the conjecture highly plausible.
It should be pointed out here that coordinatewise multiplication of the columns of the Walsh--Hadamard matrix $\emph{WH}_N$ by $2^Nh$, where
$h$ is a fixed column of $\emph{WH}_N$, defines a permutation of the columns of $\emph{WH}_N$. This is a special property of Walsh functions
whose values are in $\{1,-1\}$. Likewise, multiplication of the columns of 
$W_{N,\Phi}$ by 
$2^N h$ results in a new ${\rm TWH}$ matrix $W_{N,h\Phi}$ that is \emph{equivalent} to $W_{N,\Phi}$ in the sense that 
the inner product of any pair of columns in $W_{N,h\Phi}$ is equal to that of the corresponding pair in $W_{N,\Phi}$. We will use this notion of equivalence below. 
Throughout what follows we will use  the following simple fact extensively.
\begin{lemma}\label{lem:dyadic_orthgonal} Let $\Phi\in D$. Let $h,g$ be a pair of columns of $W_\Phi$. Then either $\langle h,g\rangle=0$  or
 $(h)_n=(g)_n$ for $0\leq n<\min(\ell(h),\ell(g))$.
\end{lemma}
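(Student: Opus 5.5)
We prove the dichotomy by exploiting the multiplicative structure of Walsh functions on dyadic blocks. Columns $h=h_k$, $g=g_j$ of $W_\Phi$ have entries $(h)_n = 2^{-N/2}W_n(k/2^N)$ for $n<\ell(h)$ and $0$ otherwise, with $\ell(h),\ell(g)\in 2^{\mathbb{N}}$. Without loss of generality let $\ell(h)=2^a\le \ell(g)=2^b$. Then $\langle h,g\rangle = 2^{-N}\sum_{n=0}^{2^a-1} W_n(k/2^N)W_n(j/2^N)$, because the entries of $h$ vanish for $n\ge 2^a$.

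The key identity is the product formula $W_n(t)=\prod_{i=0}^{a-1} r_i(t)^{n_i}$ for $n<2^a$, where $n_i$ are the binary digits of $n$. Using the digit expansion $W_n(t)=(-1)^{\sum_i n_i t_{i+1}}$ from the Rademacher description, set $\epsilon_i = (-1)^{t_{i+1}+s_{i+1}}$, with $t=k/2^N$ and $s=j/2^N$. Then
\[
\sum_{n=0}^{2^a-1} W_n(t)W_n(s)=\prod_{i=0}^{a-1}\bigl(1+\epsilon_i\bigr),
\]
since the sum over $n<2^a$ factors over the independent binary digits $n_0,\dots,n_{a-1}$. This is the Walsh Dirichlet kernel identity $D_{2^a}(t\oplus s)=2^a\mathbf{1}$ on the dyadic interval of length $2^{-a}$ containing $0$.

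The product is either $0$ or $2^a$. It is $2^a$ exactly when $t_{i+1}=s_{i+1}$ for all $i<a$, i.e.\ when $k/2^N$ and $j/2^N$ lie in the same dyadic interval of length $2^{-a}$. In the first case $\langle h,g\rangle=0$ and we are done. In the second case, every $n<2^a$ involves only digits $t_1,\dots,t_a$, which coincide for the two points, so $W_n(k/2^N)=W_n(j/2^N)$ for all $n<2^a$. This gives $(h)_n=(g)_n$ for $0\le n<\min(\ell(h),\ell(g))$.

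The only point needing care is that the truncation lengths are powers of two, which is exactly what $\Phi\in D$ guarantees. This makes the partial sum range $\{0,\dots,2^a-1\}$ a full digit cube, so the sum factors. For a non-dyadic length the sum would not factor and the dichotomy fails, which explains why the lemma is restricted to $D$. I expect no serious obstacle beyond bookkeeping the index conventions: the row/column reversal in $W_{N,\Phi}$ relative to $\mathit{WH}_N$, and the $2^{-N/2}$ normalization, which does not affect whether the inner product vanishes or whether the entries agree.
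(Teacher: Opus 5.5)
Your proof is correct and is essentially the paper's argument: both reduce the inner product to the first $2^a=\min(\ell(h),\ell(g))$ rows, where the truncated columns are scaled columns of an order-$a$ Walsh--Hadamard matrix and hence either identical or orthogonal. The paper gets this from the recursive Kronecker structure of $\mathit{WH}_N$ and the orthogonality of $\mathit{WH}_a$, whereas you verify it directly by factoring the Walsh--Dirichlet kernel $\sum_{n<2^a}W_n(t)W_n(s)=\prod_{i<a}(1+\epsilon_i)$, which also identifies the ``identical'' case as $k/2^N$ and $j/2^N$ lying in the same dyadic interval of length $2^{-a}$.
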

In other words, two columns of a dyadically truncated Walsh--Hadamard matrix are either orthogonal or are equal up to the shorter truncation length of the pair.

\begin{proof} $W_\Phi$ is a dyadic truncation of $\emph{WH}_N$ for some $N$. Because of the recursive structure of $\emph{WH}_N$, the first $2^k$ entries of a column
of $\emph{WH}_N$ is $2^{(k-N)/2}$ times a column of $\emph{WH}_k$. Since $\emph{WH}_k$ is an orthogonal matrix, for two different columns of $\emph{WH}_N$, either they are equal in the first $2^k$ rows or their truncations to the first $2^k$ rows are orthogonal to one another.
\end{proof}

 In Sect.~\ref{sect:discussion} we will briefly outline an approach to bound $\sup_\Phi \|S_\Phi\|$ in the general case of truncations that are (bounded) measurable, but not necessarily dyadic.
 Identifying a truncation of optimal norm of size $2^N\times 2^N$ for any given $N$ is intractable
but a \emph{dilation} approach that compares a truncation for a given $N$ with one for  $N+1$ that has less branching as outlined in Sect.~\ref{sect:discussion} may lead to a proof of the following.
\begin{conjecture} \label{conj:uniform_bound_general} Let $M$ denote the collection of bounded measurable functions $\Phi:[0,1]\to \mathbb{N}$. One has 
$\sup_{\Phi\in M}\|S_\Phi\|_{L^2\to L^2}\leq 2+\frac{\sqrt{2}}{2}$.
\end{conjecture}

\section{Norm bounds for dyadically truncated Walsh--Hadamard matrices\label{sect:properties}}

\subsection{Branching  in DTWH matrices}

\begin{definition} \label{def:standard_truncation} (i) The standard Walsh--Hadamard truncation of size $2^N\times 2^N$, denoted $W_N^{\rm opt}$, corresponds to the truncation that sets all entries of a column of $WH_N$ equal to zero starting from and below the first occurrence of a negative entry in that column. For the column with index $j$, if $2^k\leq j<2^{k+1}$ then the $j$th column of $W_N^{\rm opt}$ has zeros starting with the $2^{N-k-1}$-st row.

(ii) The standard two-branch Walsh--Hadamard truncations of size $2^N\times 2^N$, denoted $ B_{N-1,K}$, are 
\begin{equation}\label{eq:two_branch} B_{N-1,K}=[W_{N-1}^{\rm opt}\otimes \left(\begin{matrix}1/\sqrt{2}\\ 1/\sqrt{2}\end{matrix}\right) | \,  W_{K}^{\rm opt}\otimes\mathbf{a}(N-K) |\, T_{N-1,K}]
\end{equation}
where $\mathbf{a}(N)$ is the alternating vector $[1,-1,1,-1,\dots, 1,-1]^T/2^{N/2}\in\mathbb{R}^{2^N}$ 
and $T_{N-1,K}$ is the matrix of size $2^N\times (2^{N-1}-2^K)$ whose top entry in each row is $2^{-N/2}$ and remaining entries are zeros.
\end{definition}

The alternating vector $\mathbf{a}(N)$  is the $2^{N-1}$st column of $WH_N$.  We refer to the left half (first $2^{N-1}$ columns) of $B_{N-1,K}$ as the \emph{primary branch}
of $B_{N-1,K}$ and the next $2^K$ columns as the \emph{secondary branch} (see Defn.~\ref{def:branch}).
The matrices $W_5^{\rm opt}$ and $B_{4,2}$ are illustrated in  Fig.~\ref{fig:opt_biopt_N5}.
The ornament ``{\rm opt}'' of  $W_N^{\rm opt}$ is intended to suggest that the standard truncation has largest $\ell^2\to \ell^2$ norm among all \emph{dyadically} truncated Walsh--Hadamard matrices of size $2^N\times 2^N$ (up to equivalence).  The rest of the presentation outlines an approach to proving this claim, though an actual proof is not provided here.

The concept of branching here plays an important role in explaining why Conj.~\ref{thm:uniform_bound} should
hold. We formalize it for dyadic truncations as follows.

\begin{definition} \label{def:branch} Let $\Phi: \{0,1,\dots, 2^N-1\}\to 2^{\{0,\dots, N\}}$. 
A subset of mutually nonorthogonal columns of $W_\Phi$ is called a branch. 
A branch is said to be complete if $\Phi$ is maximal, that is, for each index $j$ of a column $g_j$ in the branch, its length $\Phi(j)$ is maximal with respect to the property of $g_j$ being nonorthogonal to other columns in the branch.
 A set of columns that does not form a branch is said to be bifurcated. $\Phi$ has a node at level $L$ if there are mutually orthogonal columns  $g_j,g_k$ with $\min\{\Phi(j), \Phi(k)\}>2^L$ that are equal in the first $2^L$ entries but $(g_j)_{2^L}\neq (g_k)_{2^L}$.
\end{definition}

The matrix $W_5^{\rm opt}$ on the left in Fig.~\ref{fig:opt_biopt_N5} has only a single branch whereas $B_{4,2}$ on  the right has two branches.
A column can lie on more than one branch as is the case for the minimal length columns of the matrix $B_{4,2}$. If a column $g$
lies on two branches then there must be a node at some level $L$ and $\ell(g)\leq 2^L$. In this case we say that $g$ is \emph{at or below the node} whereas columns that lie on only one of the branches are \emph{above the node}.
A column can only lie on one complete branch.  We say that a sub-branch (subset of columns of a branch) is complete if
each column has maximal length relative to the other columns in the sub-branch.
The standard truncation $W_N^{\rm opt}$ is characterized, up to equivalence, 
as a complete, single-branch truncation.

\begin{figure}[ht]
\begin{center}
\noindent
 \includegraphics[width=12cm,height=4cm]{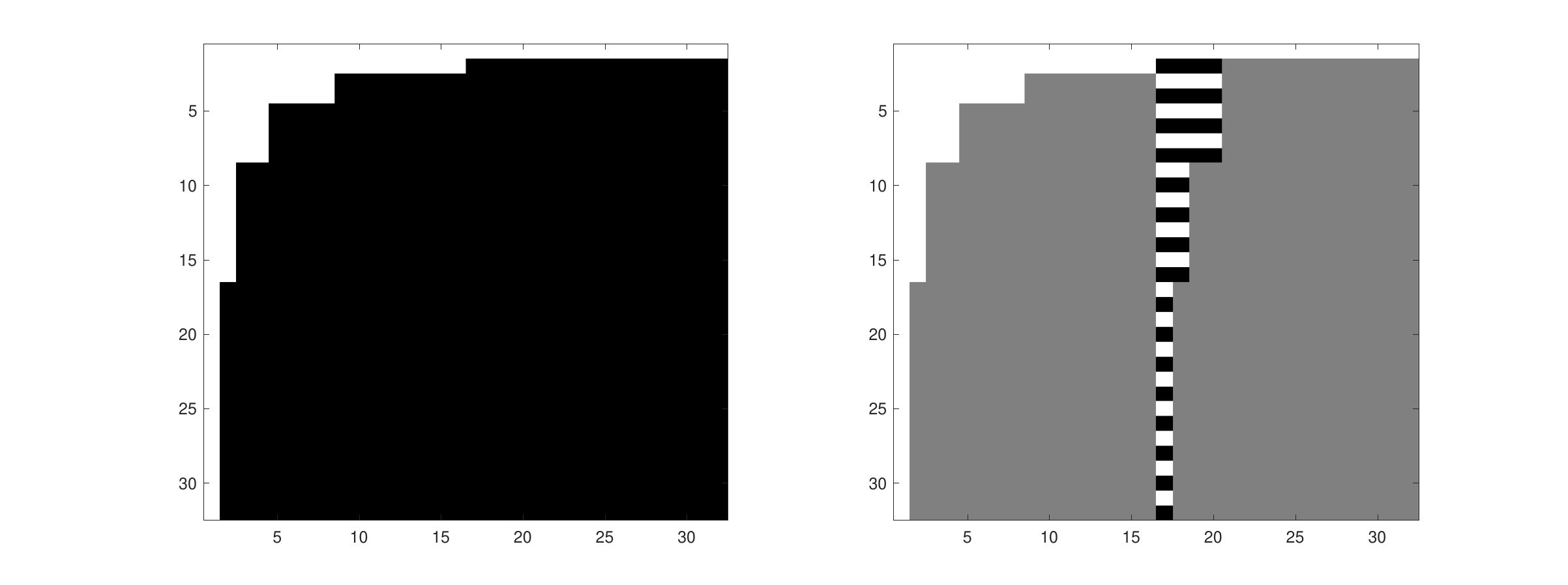}
\caption{\label{fig:opt_biopt_N5} Left: The $32\times 32$ matrix $W_N^{\rm opt}$ ($N=5$). Right: $32\times 32$ two-branch matrix
$B_{N-1,K}$ with $N=5$ and $K=2$}
\end{center}
\end{figure}

\subsection{Branching reduces norm in DTWH matrices}

\begin{conjecture}
\label{claim:one_node} Suppose that $\mathcal{C}$ is a set of columns of a DTWH matrix $W_\Phi$ that has exactly one node in which each branch is complete above the node. Denote by $W_{\Phi,\mathcal{C}}$ the submatrix of $W_\Phi$ whose columns are those in $\mathcal{C}$. Let $\Phi'$ be the truncation that assigns to each column at or above the node its length with respect to a selected primary branch, and $\Phi'=\Phi$
for any column whose length is smaller than the nodal length.  Then $\|W_{\Phi',\mathcal{C}}\|\geq \|W_{\Phi,\mathcal{C}}\|$.
\end{conjecture}

\begin{proposition}\label{prop:strong_dyad} If Conjecture~\ref{claim:one_node} holds then $W_{N}^{\rm opt}$ has largest norm
among all 
DTWH matrices of size $2^N\times 2^N$.
\end{proposition}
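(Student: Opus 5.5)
The plan is to start from an arbitrary DTWH matrix $W_\Phi$ of size $2^N\times 2^N$ and reduce it to $W_N^{\rm opt}$ (up to equivalence) through a finite sequence of moves, none of which decreases the norm. There are two kinds of moves. The first is \emph{completion}: lengthening a column within its branch. The second is \emph{node removal}, which uses Conjecture~\ref{claim:one_node}. Equivalence (multiplying all columns coordinatewise by $2^Nh$ for a column $h$ of $\emph{WH}_N$) preserves all inner products between columns, hence the Gram matrix $W_\Phi^TW_\Phi$, hence the norm. So we may freely pass to equivalent truncations. At the end we use the characterization, stated after Definition~\ref{def:branch}, that a complete single-branch truncation is equivalent to $W_N^{\rm opt}$.

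The induction is on the number of nodes of $\Phi$ (levels $L$ at which some pair of columns splits), with ties broken by the total length $\sum_k\Phi(k)$.

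\textbf{Step 1 (no nodes).} If $\Phi$ has no node, then by Lemma~\ref{lem:dyadic_orthgonal} all columns lie on a single branch. Each column can then be lengthened to its maximal length within that branch. I would need to check that this completion does not decrease the norm. Here I would try to treat completion as a degenerate case of Conjecture~\ref{claim:one_node}. Alternatively, I would argue directly: by Lemma~\ref{lem:dyadic_orthgonal}, lengthening a column that stays on the branch only adds nonnegative contributions to the Gram entries of the other nonorthogonal columns in the branch. I would then compare the norm eigenvectors, whose entries can be taken nonnegative by Perron--Frobenius once the Gram matrix is entrywise nonnegative. This yields a complete single branch, which is equivalent to $W_N^{\rm opt}$.

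\textbf{Step 2 (at least one node).} Choose the \emph{deepest} node, at the largest level $L$. Take $\mathcal{C}$ to be the set of columns on the branches through that node, together with the columns at or below the nodal length. Since the node is deepest, $\mathcal{C}$ has exactly one node. First complete each branch above the node, which is the Step~1 argument applied within each branch. Then apply Conjecture~\ref{claim:one_node}: replacing $\Phi$ by $\Phi'$ gives $\|W_{\Phi',\mathcal{C}}\|\ge\|W_{\Phi,\mathcal{C}}\|$ and merges the two branches into the primary one.

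To pass from the submatrix to the whole matrix, I would write the full Gram matrix in block form with respect to $\mathcal{C}$ and its complement. Columns outside $\mathcal{C}$ are orthogonal to the columns of $\mathcal{C}$ above the node, because they diverge at a shallower level. Their inner products with the columns at or below the node are unchanged, since $\Phi'=\Phi$ there. One then needs monotonicity of the largest eigenvalue under the modification. Using the nonnegativity structure, and after an equivalence making the relevant entries nonnegative, I would test the new Gram matrix against a norm eigenvector of the old one to get this.

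The new truncation has one fewer node, and its columns remain dyadic, since the primary-branch lengths are powers of two. Induction then finishes the proof.

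\textbf{Main obstacle.} The hardest step is the passage from the local inequality on the submatrix $\mathcal{C}$ in Conjecture~\ref{claim:one_node} to the norm of the whole matrix. Columns at or below the node are shared with other parts of the matrix, so the block interaction is nontrivial. My first attempt would be to choose $\mathcal{C}$ to contain every column that is nonorthogonal to the node's branches. Then the complement block is orthogonal to everything in $\mathcal{C}$ that changes, and the Gram matrix splits into a block-diagonal part plus an unchanged part, to which the Rayleigh quotient argument applies. A secondary concern is ensuring that the completion in Step~1 is norm-nondecreasing without sign issues; there I would rely on equivalence and Perron--Frobenius.
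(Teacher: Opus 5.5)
Your overall architecture (take a deepest node, complete the branches above it, apply Conjecture~\ref{claim:one_node} to a one-node column set, globalize, induct) is the same as the paper's. Your completion step is sound. By Lemma~\ref{lem:dyadic_orthgonal} every Gram entry of a DTWH matrix is either $0$ or $\min(\ell(g),\ell(h))/2^N$. So $W_\Phi^TW_\Phi$ is already entrywise nonnegative, with no equivalence needed, and any length change that weakly raises every entry cannot lower the norm.

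The gap is the globalization step.
\begin{itemize}
\item Conjecture~\ref{claim:one_node} only compares the top eigenvalues of $G_{\mathcal{C}}=W_{\Phi,\mathcal{C}}^TW_{\Phi,\mathcal{C}}$ and $G'_{\mathcal{C}}$. Testing the new Gram matrix against an old norm eigenvector $v$ instead needs $v_{\mathcal{C}}^T(G'_{\mathcal{C}}-G_{\mathcal{C}})v_{\mathcal{C}}\ge 0$ at that particular vector, and the conjecture says nothing about this.
\item Perron--Frobenius monotonicity is unavailable, because the merge is not entrywise monotone: entries inside the secondary branch drop, while primary--secondary entries jump from $0$ to $2^L/2^N$.
\item The test actually fails in the model case $B_{N-1,K}\to W_N^{\rm opt}$. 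With $\mathbf{u}=\alpha\mathbf{x}+\beta\mathbf{y}+\gamma\mathbf{z}$ and the notation of Tab.~\ref{tab:f-params}, a direct computation gives $\|W_N^{\rm opt}\mathbf{u}\|^2-\|B_{N-1,K}\mathbf{u}\|^2=\beta\bigl(2\alpha xy-\beta(B^2-y^2)\bigr)$. This is negative as soon as $\beta(B^2-y^2)>2\alpha xy$. At an optimizer $x$ and $y$ are small (cf.~(\ref{eq:level_correspondence})), while $B^2$ is of order $\|W_K^{\rm opt}\|^2$. So this can happen once the secondary branch carries non-negligible weight, plausibly for $K$ near $N-1$.
\item Any norm gain the conjecture asserts appears only after re-optimizing $(\alpha,\beta,\gamma)$. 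An argument that freezes the old eigenvector therefore cannot deliver it.
\item Your decoupling device does not help. Columns at or below the node also meet columns of other blocks; length-one columns, for instance, are nonorthogonal to every column. So $\mathcal{C}$ cannot be made closed under nonorthogonality without swallowing other nodes, which the one-node hypothesis forbids.
\end{itemize}

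The missing idea is the structural one the paper uses. Take a block $\mathcal{B}$ at the deepest nodal level $L$. Every above-node column $g$ of $\mathcal{B}$ satisfies $\langle g,\tilde g\rangle=\ell(\tilde g)/2^N$ for each branch-aligned column $\tilde g$ at or below the node, and is orthogonal to all remaining columns outside the block. This persists under any modification keeping above-node lengths at least $2^L$. Hence the entire interaction of the modified columns with the rest of $W_\Phi$ is $2\tau\sum_{g\ \mathrm{above}}c_g$, where $\tau=\sum_{\tilde g}c_{\tilde g}\ell(\tilde g)/2^N$ depends only on unchanged coordinates.

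What must be shown is therefore not a bare norm inequality. You need that the single-branch replacement does not decrease $\max_{\|\mathbf{c}\|=\rho}\bigl(\|W_{\mathrm{above}}\mathbf{c}\|^2+2\tau\,\mathbf{1}^T\mathbf{c}\bigr)$ for fixed $\tau\ge 0$ and $\rho$. This is an $\ell^2$-plus-$\ell^1$ problem of the same shape as $F=F_2+F_1$. The paper bridges it with its claim that the $\ell^1$ mass of the above-node part of an optimizer is maximized by a single branch. Given that, you fix the outside coordinates of the old optimizer and re-optimize only the above-node ones, which yields $\|W_{\Phi'}\|\ge\|W_\Phi\|$.

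Finally, induct as the paper does: on the deepest nodal level, processing every block with a node there. Inducting on the number of node levels does not work, since merging one block need not remove its level when other blocks bifurcate at the same level.
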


\begin{proof}
Assume that Conj.~\ref{claim:one_node} holds in the $2^N\times 2^N$ case. Suppose that $W_\Phi$ is a bifurcated DTWH matrix of size $2^N\times 2^N$.  Then there is a deepest nodal level $L$, that is, such that $W_\Phi$ does not have any nodal levels larger than $L\leq N$.  Fix such $L$. The columns of $W_\Phi$ can be segmented into $2^{L}$ blocks of $2^{N-L}$ consecutive columns such that any pair of columns $g_1,g_2$ in different blocks $\mathcal{B}_1,\mathcal{B}_2$ with $\ell(g_i)>2^L$ are orthogonal to one another.  If $\mathcal{B}$ is such a block that has a node at level $L$ then we can apply Conj.~\ref{claim:one_node} to the submatrix whose columns $\mathcal{C}$  are those of $\mathcal{B}$ supplemented by any columns below the node that are branch-aligned with $\mathcal{B}$ (that is, any column $\tilde{g}$ such that $\ell(\tilde{g})\leq 2^L$ and $\langle \tilde{g},g\rangle= \ell(\tilde{g})/2^N$ for any $g$ in $\mathcal{B}$).  
Define a \emph{node-reduced}  truncation $\Phi'$ as follows: (i) replace the restriction of $\Phi$ to $\mathcal{B}$ by $\Phi'$ whose restriction to $\mathcal{B}$ is equivalent to the restriction of the standard truncation and (ii) for columns $g$ outside $\mathcal{B}$, set $\Phi(g)=\Phi(g)$.
By Conj.~\ref{claim:one_node}, 
$\|W_{\Phi',\mathcal{C}}\|\geq \|W_{\Phi,\mathcal{C}}\|$.
 We claim that, in fact, $\|W_{\Phi'}\|\geq \|W_{\Phi}\|$.
  Any length modification $\Phi'$ such that $\Phi'(g)\geq 2^L$   whenever $\Phi(g)\geq 2^L$ 
  (and $\Phi'(g)=\Phi(g)$ if $\Phi(g)<2^L$)
  does not change inner products of pairs in which one column was above the node and the other below. As described below, the sum of coefficients $c_g$ of vectors $g$ above the node for an input vector $\mathbf{c}$ that maximizes $\|W_{\Phi,\mathcal{B}}\mathbf{c}_{\mathcal{B}}\|$ (with $\mathbf{c}_{\mathcal{B}}$ the restriction of $\mathbf{c}$ to coordinates in ${\mathcal{B}}$)  is optimized when the columns above the node form a single branch. Thus, the net contribution to $\|W_{\Phi} \mathbf{c}\|^2$ coming from terms of the form $\langle c_{g_1} g_1,c_{g_2} g_2\rangle$ where $\Phi(g_1)<2^L$ and $\Phi(g_2)\geq 2^L$  is not decreased when the bifurcated columns above the node are replaced by a single branch within  a fixed block $\mathcal{B}$ and the coordinates of $\mathbf{c}$ outside $\mathcal{B}$ are fixed, even if there are other bifurcations at the same level. This explains why $\|W_{\Phi'}\|\geq \|W_{\Phi}\|$ for the node-reduced truncation $\Phi'$.
  Repeating this for any other node at deepest level $L$  results in a new DTWH matrix whose norm is at least that of $W_\Phi$ and  whose deepest node  level is less than $L$. Since there are finitely many levels, the proposition  follows.
\end{proof}

\bigskip

\section{Evidence for optimality of the standard truncation\label{sect:evidence}}

In this section we consider the following special case of Conj.~\ref{claim:one_node}. 

\begin{conjecture}
\label{claim:bnk_opt} For fixed $N$, for each $K=0,\dots, N$, the norm of the matrix $B_{N-1,K}$ is less than that of 
$W_{N}^{\rm opt}$.
\end{conjecture}

Conjectures~\ref{claim:one_node} and  \ref{claim:bnk_opt} are listed as such because we do not provide actual proofs here, but we will provide adequate evidence to support the latter.
Conjecture \ref{claim:bnk_opt} is the special case in which the node is at level zero so the columns truncated to length one are nodal columns, the primary branch is the set of all columns of index smaller than $2^{N-1}$ and the secondary branch consists of the columns with indices $2^{N-1},\dots, 2^{N-1}+2^K-1$.   
It asserts, in this specific case,  
that the norm of a truncation with one node is dominated by replacing the truncation lengths of the columns above the node by ones in which the same columns now lie on a single branch. 
For other one-node  DTWH matrices,  for the part $\mathcal{C}'$ of $\mathcal{C}$ above the node, we can assume that $\mathcal{C}'$ forms a block of $2^{N-L}$ 
consecutive columns of $W_\Phi$ of the form $W_{N-L,\Phi'}\otimes g$ 
for a column $g$ of the $2^L\times 2^L$ Walsh--Hadamard matrix $\emph{WH}_{L}$.  Columns on different branches that are above the node  are orthogonal to one another by Lem.~\ref{lem:dyadic_orthgonal}. The norms attached to each of these branches are then optimized when the parts of the branches above the node are complete.
If the part of a branch above the node has $2^{N-\tilde{L}}$ columns ($L\leq \tilde{L}$) and is complete, then that part is equivalent to 
$W_{N-\tilde{L}}^{\rm opt}\otimes \tilde{g}$ where $\tilde{g}$ is a column of $\emph{WH}_{\tilde{L}}$.
The case of $B_{N-1,K}$ thus  typifies the situation of the part of $\mathcal{C}$ 
\emph{above the node} (that is the columns in $\mathcal{C}$ such that $\ell(g)\geq 2^L$) in  Conj.~\ref{claim:one_node}.

We  have not explained why we consider only the case in which the number of columns in a branch is $2^K$ for some $K$
and in which the number of columns in the primary branch is $2^{N-1}$. 
 In light of arguments outlined below, cases in which the number of columns $s$ in the secondary  branch is not a power of two can be viewed as intermediate between $B_{N-1,K-1}$ and $B_{N-1,K}$ when $2^{K-1}<s<2^K$, and satisfy a corresponding norm inequality.  We also claim that if $B_{P,K}$ is a two-branch truncation with nodal level $L=0$  whose primary branch has width $2^P$ where $K\leq P<N-1$ then 
 $\|B_{P,K}\|\leq \|B_{N-1,K}\|$.
 With these observations, establishing norm bounds for $B_{N-1,K}$ can be viewed as the essence of establishing 
 Conj.~\ref{claim:one_node}.

\smallskip
In support of  Conj.~\ref{claim:bnk_opt} we argue, specifically,  that the operator norm of $B_{N-1,K}$ decreases with $K$ to conclude that  $\|B_{N-1,K}\|\leq \|W_N^{\rm opt}\|$.  The approach is to express the value of $F$ in
(\ref{eq:falphabeta}) at a critical point (as a function of $(\alpha,\beta)\in (0,1)^2$)  in terms of an expression that can be argued to decrease with $K$ for fixed $N$ (\ref{eq:Fcritical}), followed by arguments explaining this decrease.

Observe that a unit
vector $\mathbf{u}\in\mathbb{R}^{2^N}$ that optimizes $\|B_{N-1,K} (\mathbf{u})\|$ will have the form 
$\mathbf{u}=\alpha\mathbf{x}+\beta\mathbf{y}+\gamma\mathbf{z}$ where the unit vector $\mathbf{x}\in\mathbb{R}^{2^{N}}_+$ (the ``$+$'' signifies that all coordinates are nonnegative) is supported in
the first $2^{N-1}$ coordinates, the unit vector $\mathbf{y}\in\mathbb{R}^{2^N}_+$ is supported in the next $2^K$ coordinates, and $\mathbf{z}$ is supported in the last $2^{N-1}-2^K$ coordinates
and has constant entries $(2^{N-1}-2^{K})^{-1/2}$ in each of those coordinates. Since $\|\mathbf{u}\|=1$,  $\alpha^2+\beta^2+\gamma^2=1$. Then
\begin{eqnarray}\label{eq:bnk_norm} 
\|B_{N-1,K} (\mathbf{u})\|^2
&=&\alpha^2 \|W_{N-1}^{\rm opt}\mathbf{x}\|^2+\beta^2 \|W_{N,K}^{\rm opt} \mathbf{y}\|^2 +\gamma^2 \|T_{N-1,K}\mathbf{z}\|^2 \notag \\
&+&2\gamma \langle T_{N-1,K}\mathbf{z},\alpha W_{N-1}^{\rm opt}\mathbf{x}+\beta W_{N,K}^{\rm opt}\mathbf{y}\rangle\, .
\end{eqnarray}
Here, $W_{N,K}^{\rm opt}=W_K^{\rm opt}\otimes\mathbf{a}(N-K)$ and $T_{N-1,K}$ are as defined in  (\ref{eq:two_branch}). Because of the structure of $T_{N-1,K}$ and $\mathbf{z}$,   
\[\langle T\mathbf{z},\alpha W_{N-1}^{\rm opt}\mathbf{x}
+\beta W_{N,K}^{\rm opt}\mathbf{y}\rangle =2^{-N} (2^{N-1}-2^{K})^{1/2}(\alpha \|\mathbf{x}\|_1+\beta\|\mathbf{y}\|_1)\]
where $\|\mathbf{x}\|_1$ is the sum of the coordinates of $\mathbf{x}\in\mathbb{R}^{2^N}_+$.
We may rewrite (\ref{eq:bnk_norm}) as 
\begin{eqnarray}
\label{eq:bnk_norm2}
\|B_{N-1,K} (\mathbf{u})\|^2
&=&\alpha^2 \|W_{N-1}^{\rm opt}\mathbf{x}\|^2+\beta^2 \|W_{N,K}^{\rm opt} \mathbf{y}\|^2 +\gamma^2 2^{-N}(2^{N-1}-2^K)\notag \\
&+&2^{1-N} (2^{N-1}-2^{K})^{1/2}\gamma (\alpha\|\mathbf{x}\|_1+\beta \|\mathbf{y}\|_1) \, .
\end{eqnarray}
Considered as a function with inputs $\mathbf{x},\mathbf{y}$, $\alpha,\beta$ (and $\gamma^2=1-\alpha^2-\beta^2$), abbreviating 
$x=2^{-N/2}\|\mathbf{x}\|_1$ and 
$y=2^{-N/2}\|\mathbf{y}\|_1$, for $N$ and $K$ fixed, $\|B_{N-1,K} (\alpha\mathbf{x}+\beta\mathbf{y}+\gamma\mathbf{z})\|^2$ can be expressed in the more generic form
\begin{equation}\label{eq:falphabeta} F(\alpha,\beta,x,y)=\alpha^2 A^2+\beta^2 B^2+\gamma^2 C^2 +2\gamma C(\alpha x+\beta y)
\end{equation}
with the specific associations made in Tab.~\ref{tab:f-params}.

\begin{table}[tbhp]
{\footnotesize
\caption{Parameters of  $F$ in (\ref{eq:falphabeta}) when $F=\|B_{N-1,K} (\alpha\mathbf{x}+\beta\mathbf{y}+\gamma\mathbf{z})\|^2$}
\label{tab:f-params}
\small
\begin{center}
\begin{tabular}{|c|c|c|c|c|c|}
\hline 
parameter of $F$  & $A^2$  & $B^2 $ & $C^2 $& $x$ & $y  $ \\ \hline
term of $\|B_{N-1,K}(\cdots)\|^2$ & $\|W_{N-1}^{\rm opt} \mathbf{x}\|^2$ &$\|W_{K}^{\rm opt} \mathbf{y}\|^2$ & $2^{-1}-2^{K-N}$ &
$\frac{\|\mathbf{x}\|_1}{\sqrt{2^N}}$ &$\frac{\|\mathbf{y}\|_1}{\sqrt{2^N}}$ \\ \hline
\end{tabular}
\end{center}
}
\end{table}%

Momentarily we will consider behavior of $F$ near a critical point. First, before analyzing $F$ all at once, we review separately optimization of the ``$\ell^2$'' and ``$\ell^1$'' parts of $F$ in order to gain some perspective on relative behavior of the parameters of $F$.

\paragraph{Analysis of $F=F_2+F_1$}

One can express $F$ in  (\ref{eq:bnk_norm}) and  (\ref{eq:falphabeta}) as $F=F_2+F_1$ where
\begin{equation}\label{eq:F_2} F_2(\alpha,\beta;\mathbf{x},\mathbf{y})=\alpha^2\|W_{N-1}^{\rm opt}\mathbf{x}\|^2+\beta^2\|W_{K}^{\rm opt}\mathbf{y}\|^2+
\gamma^2 \left(\frac{1}{2}-2^{K-N}\right)
\end{equation}
and
\begin{equation}\label{eq:F_1} F_1(\alpha,\beta;\mathbf{x},\mathbf{y})= 2^{-N/2}\left(\frac{1}{2}-2^{K-N}\right)^{1/2}\gamma(\alpha \|\mathbf{x}\|_1+\beta\|\mathbf{y}\|_1)\, .
\end{equation}
Although $F_1$ and $F_2$ are separately maximized at different $(\alpha,\beta)$-parameter values, understanding their separate optima can aid in understanding how optimizing parameters for $B_{N-1,K}$ change with $K$.
For fixed $\mathbf{x},\mathbf{y}$, $F_2$ considered \emph{by itself} is maximized when the largest term of the three terms in (\ref{eq:F_2}) is maximized. Since $\|W_N^{\rm opt}\|$ is increasing in $N$, when $\mathbf{x}$ is close to a norm eigenvector of $W_N^{\rm opt}$, this happens when $\alpha=1$ (and $\beta=\gamma=0$).
The parameter $\beta$ is coupled with $\gamma$ in $F_1$ and only indirectly coupled with $\alpha$ through $\alpha^2+\beta^2+\gamma^2=1$ so one might expect $F_1$ to give an accurate estimate of $\beta$ in $F$, at least when $K$ is small.  Again with $\mathbf{x},\mathbf{y}$ fixed, setting $r=\|\mathbf{y}\|_1/\|\mathbf{x}\|_1$ and assuming $r<1$, using the 
binomial approximation $\sqrt{1+x}\approx 1+\frac{x}{2}$  with $x=4/(1/r-r)^2$ for small $r$,
$F_1$ is maximized (with respect to $\alpha,\beta$) approximately when  
\begin{equation}\label{eq:f1params} \beta
\approx  {\alpha}\frac{r}{(1-r^2)};\quad \gamma\approx \alpha \frac{1}{\sqrt{1-r^2}}\, .
\end{equation}
The condition $\alpha^2+\beta^2 +\gamma^2=1$ ultimately 
yields, at a critical point of $F_1$,
\begin{equation*}F_1(\alpha,\beta;\mathbf{x},\mathbf{y})= 2^{-N/2}\left(\frac{1}{2}-2^{K-N}\right)^{1/2}\gamma(\alpha+\beta r) \|\mathbf{x}\|_1 
\approx 2^{-N/2}\left(\frac{1}{2}-2^{K-N}\right)^{1/2}\frac{(1-r^2)^{1/2}}{1+(1-r^2)^2} \|\mathbf{x}\|_1  \, 
\end{equation*}
each of whose factors is decreasing with $K$, as explained below.
Generally $|F_1|<1$ whereas $F_2\approx (1+\sqrt{2}/2)^2$ so $F_2$ will dominate $F_1$ when optimizing $F=F_1+F_2$.
At a critical point of $F$ the growth of $F_2$ in $\alpha$ will balance decay of $F_1$ when $\alpha$ approaches one while decay of
$F_2$ in $\beta$ will balance growth of $F_1$ in $\beta$ when $\beta $ reaches a critical threshold that grows with $K$. For small $K$
when $\alpha$ remains close to one, according to  (\ref{eq:f1params}) $\beta$ should grow slightly faster than $r$ which is proportional to $2^{K/2}$,
more specifically, $\beta\sim 2^{(K-N)/2}$ for $K$ small.

The analysis of $F_1$ by itself  provides correct order of magnitude of $\beta$ with regard to $F=F_1+F_2$  but not of $\gamma$.
For small values of $K$, as indicated in Fig.~\ref{fig:alphabetaxyz},  the decrease in $\gamma$ when $K-1$ is replaced by $K$ will be proportional to  the change in the number of columns of $B_{N-1,K}$ truncated to length one, that is, to $2^{K-1}$.   This is also explained further below.

\begin{figure}[htp]
\begin{center}
\noindent
 \includegraphics[width=12cm,height=4cm]{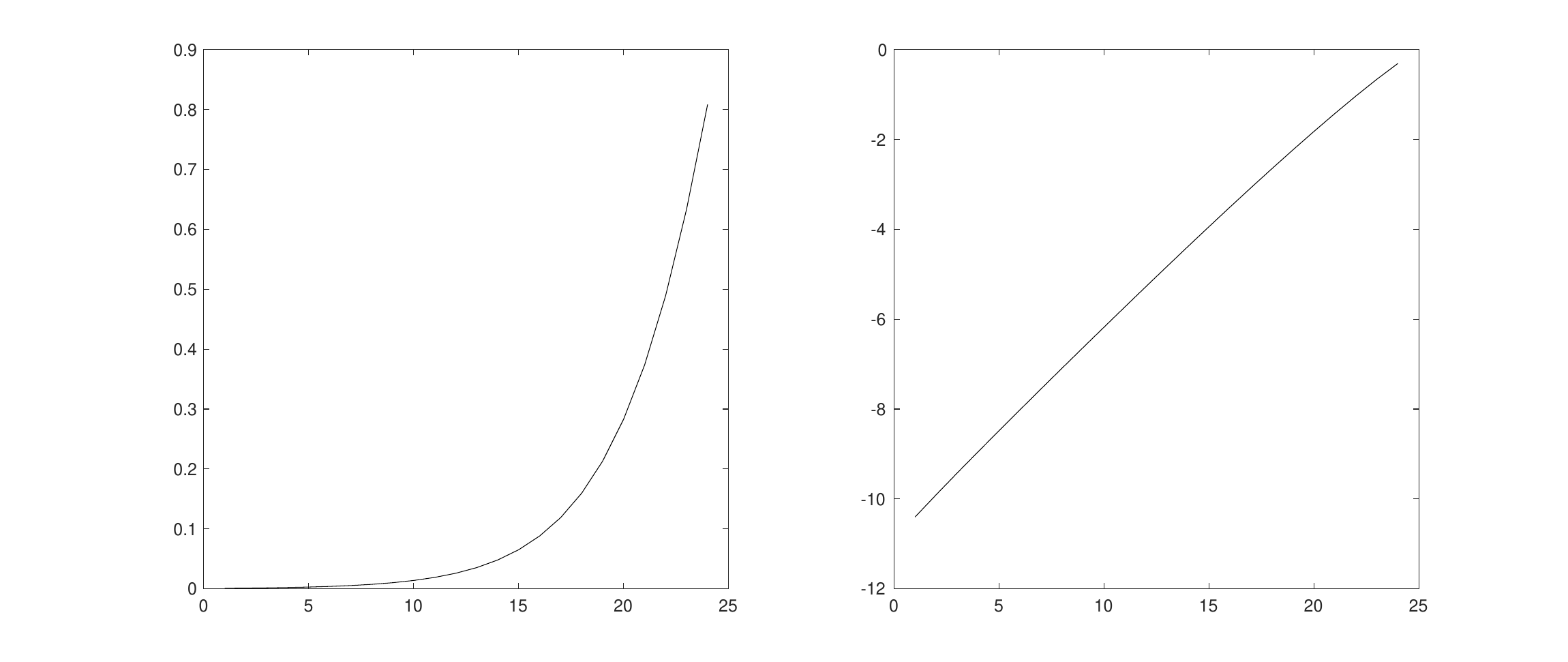}
\caption{\label{fig:secondlone_over_primelone_N25} Left: Plot of $\|\mathbf{y}\|_1/\|\mathbf{x}\|_1$ 
where $\mathbf{x}$ and $\mathbf{y}$ optimize $\|B_{N-1,K}(\mathbf{u})\|$ as in (\ref{eq:bnk_norm})
for $K=1$ to $K=23$ and $N=25$. Right: $\log_2(\|\mathbf{y}\|_1/\|\mathbf{x}\|_1)$ for same range. The ratio is approximately proportional to $2^{K/2}$ for small $K$}
\end{center}
\end{figure}

\begin{proposition} \label{prop:Fcritical} 
Let $F$ be defined as in (\ref{eq:falphabeta}) where $A,B,C$ and $x,y$ are fixed positive numbers, and for any $(\alpha,\beta)\in (0,1)^2$, $\gamma$ is the positive solution of  $\gamma^2=1-\alpha^2-\beta^2$.
 Then at a critical point $(\alpha,\beta)$ of $F$ in (\ref{eq:falphabeta}) as a function of $(\alpha,\beta)$,
  that is $\frac{\partial F}{\partial\alpha}= 0 =\frac{\partial F}{\partial\beta}$, one has
 \begin{equation}\label{eq:Fcritical}F(\alpha,\beta)=A^2+Cx\frac{\gamma}{\alpha}=B^2+Cy\frac{\gamma}{\beta}\, .
 \end{equation}
 \end{proposition}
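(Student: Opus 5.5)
The plan is a direct computation: differentiate $F$ using the constraint $\gamma=\gamma(\alpha,\beta)$, rewrite the two critical-point equations so they share a common right-hand side $\Lambda$, and then use $\alpha^2+\beta^2+\gamma^2=1$ to show that $F$ itself equals $\Lambda$. Conceptually, $F$ is a quadratic form in $(\alpha,\beta,\gamma)$ restricted to the unit sphere. A critical point of the constrained problem is a Lagrange point, and at such a point Euler's identity for homogeneous functions forces the value of the form to equal the multiplier. The identity (\ref{eq:Fcritical}) is this multiplier written in two ways.

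\textbf{Step 1: partial derivatives.} On $(0,1)^2$ with $\gamma>0$ we have $\partial\gamma/\partial\alpha=-\alpha/\gamma$ and $\partial\gamma/\partial\beta=-\beta/\gamma$. Differentiating (\ref{eq:falphabeta}) gives
\[
\tfrac12\,\partial_\alpha F=\alpha A^2-\alpha C^2-\frac{\alpha}{\gamma}\,C(\alpha x+\beta y)+\gamma C x .
\]
The $\beta$-derivative $\tfrac12\,\partial_\beta F$ is the same expression with $(\alpha,A,x)$ replaced by $(\beta,B,y)$.

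\textbf{Step 2: a common value $\Lambda$.} Set $\partial_\alpha F=0$ and divide by $\alpha>0$; this gives
\[
A^2+Cx\frac{\gamma}{\alpha}=C^2+\frac{C(\alpha x+\beta y)}{\gamma}=:\Lambda .
\]
In the same way, $\partial_\beta F=0$ divided by $\beta>0$ gives $B^2+Cy\gamma/\beta=\Lambda$. This already proves the second equality in (\ref{eq:Fcritical}).

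\textbf{Step 3: identify $F$ with $\Lambda$.} Multiply the first relation by $\alpha^2$ and the second by $\beta^2$, then add:
\[
\alpha^2A^2+\beta^2B^2+\gamma C(\alpha x+\beta y)=(\alpha^2+\beta^2)\Lambda=(1-\gamma^2)\Lambda .
\]
Next expand the right-hand side using $\Lambda=C^2+C(\alpha x+\beta y)/\gamma$:
\[
(1-\gamma^2)\Lambda=C^2+\frac{C(\alpha x+\beta y)}{\gamma}-\gamma^2C^2-\gamma C(\alpha x+\beta y).
\]
Move the terms $\gamma^2C^2$ and $\gamma C(\alpha x+\beta y)$ to the left-hand side. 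The left-hand side then becomes exactly $F(\alpha,\beta)$ as given in (\ref{eq:falphabeta}), while the right-hand side becomes $C^2+C(\alpha x+\beta y)/\gamma=\Lambda$. Hence $F=\Lambda$, which completes (\ref{eq:Fcritical}).

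There is no real obstacle beyond keeping the $\gamma$-dependence straight. The only point needing care is noticing that the intermediate quantity $C^2+C(\alpha x+\beta y)/\gamma$ is the shared value in Step 2. Positivity of $\alpha$, $\beta$ and $\gamma$ on the open domain justifies every division used above.
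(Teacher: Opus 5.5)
Your proof is correct, and it reaches $F=\Lambda$ by a different and shorter route than the paper. Both arguments start from the same first-order conditions. Your two $\Lambda$ relations are exactly the paper's (\ref{eq:dfdalpha=0}) and (\ref{eq:dfdbeta=0}), and both proofs obtain the second equality in (\ref{eq:Fcritical}) by comparing them.

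The difference lies in showing $F=\Lambda$. The paper works asymmetrically: it solves one equation for $C\beta y/\gamma$ and the other for $B^2$ (see (\ref{eq:cbetay})--(\ref{eq:bsquared})), substitutes both into $F$, and simplifies over several lines using $\alpha^2+\beta^2+\gamma^2=1$. You instead add $\alpha^2$ times the first relation to $\beta^2$ times the second, and use the definition of $\Lambda$ as a third relation weighted by $\gamma^2$. This is Euler's identity $2Q=\alpha Q_\alpha+\beta Q_\beta+\gamma Q_\gamma$ for the quadratic form $Q$ at a constrained critical point.

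Your version is brief and symmetric in the two branches, and it explains why the identity holds. The critical value is the Lagrange multiplier: $(\alpha,\beta,\gamma)$ is an eigenvector, with eigenvalue $F$, of the symmetric $3\times 3$ matrix with rows $(A^2,0,Cx)$, $(0,B^2,Cy)$, $(Cx,Cy,C^2)$. Equation (\ref{eq:Fcritical}) is then just the first two rows of that eigen-equation.

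The same viewpoint identifies the maximum of $F$ as the Perron eigenvalue of this nonnegative, irreducible matrix. That could be useful for the monotonicity-in-$K$ program. The paper's route is a pure verification that needs no such framing, but it is longer and hides the reason the identity holds.
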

 A proof of Prop.~\ref{prop:Fcritical}  can be found in Appendix \ref{appendix:proof_Fcritical}.

To prove Conj.~\ref{claim:bnk_opt}  it would be sufficient to show that as $K$ increases, if the inputs $\mathbf{x},\mathbf{y}$  and $\alpha,\beta$ correspond to optimizers of $B_{N-1,K}$ then  the corresponding expression $A^2+Cx\frac{\gamma}{\alpha}$ decreases, where
$A,C,x,\gamma,\alpha$ are as in  Tab.~\ref{tab:f-params}.  We start with general  arguments for plausibility of this conjecture. We will denote by $A(N-1,K)$, $C(N-1,K)$ and $x(N-1,K)$ values of $A,C,x$ in (\ref{eq:falphabeta}) corresponding to Tab.~\ref{tab:f-params}
when $N,K$ in (\ref{eq:bnk_norm}) are specified.  

\paragraph{Subclaim: $A^2(N-1,K)$ increases \emph{moderately} as $K$ ranges from null to  $K=N-1$.}
In the \emph{null} case in which there is no secondary component, the matrix
 corresponding to $B_{N-1,K}$ is $W_N^{\rm opt}$. In that case, up to normalization, $\mathbf{x}$ forms the first $2^{N-1}$
 entries of the norm eigenvector of $W_N^{\rm opt}$. This  $\mathbf{x}$ is not a norm eigenvector of $W_{N-1}^{\rm opt}$ so the value of $A^2(N-1, {\rm null})$  as defined
 in Tab.~\ref{tab:f-params} is smaller than $\|W_{N-1}^{\rm opt}\|$.
When  $K=N-1$, the left side of  $B_{N-1,N-1}$ is $W_{N-1}^{\rm opt}\otimes [1,1]^T/\sqrt{2}$ and the right side is $W_{N-1}^{\rm opt}\otimes [1,-1]^T/\sqrt{2}$. Thus the two sides are orthogonal to one another and each has norm equal to that of $\|W_{N-1}^{\rm opt}\|$. We can take  $\mathbf{x}$ to be the norm eigenvector of $W_{N-1}^{\rm opt}$ when $K=N-1$. Thus, $A^2(N-1, {\rm null})< A^2(N-1, N-1)$. 
We do not provide here an analytical proof that $A^2$ is increasing in $K$ for intermediate values. However, as suggested by Fig.~\ref{fig:nprime25_vecs},
for intermediate $K$ the $\mathbf{x}$-component of an optimizer of $\|B_{N-1,K}(\mathbf{u})\|$ can be regarded as a perturbation that gets closer to the norm eigenvector of $W_{N-1}^{\rm opt}$ as $K$ increases to $N-1$.  This plausibility argument is supported by numerical evidence in  Fig.~\ref{fig:Fsumterms} which plots $A^2(N,K)$ vs. $K$ for $N=24$. Further quantification of $A^2(N,K)$ is described below.

\paragraph{Subclaim: $x(N-1,K)=2^{-N/2}\|\mathbf{x}(N-1,K)\|_1$ decreases in $K$.}  For now we refer to Fig.~\ref{fig:nprime25_vecs} in which the ``level vector'' plots are relatively \emph{flatter}, and thus have larger $\ell^1$-norm, for smaller $K$.  Below we will quantify $x(N,0)$ in terms of the eigen-decomposition of $W_N^{\rm opt}$.  

\paragraph{Subclaim: $\gamma(N-1,K)/\alpha(N-1,K)$ is decreasing in $K$.} Log plots of $\alpha(N,K)$ and $\gamma(N,K)$ are provided in Fig.~\ref{fig:alphabetaxyz}. Further support of the subclaim is provided below.

We add to these claims the observation that 
\[C^2(N-1,K)-C^2(N-1,K+1)=2^{-1}-2^{K-N}-(2^{-1}-2^{K+1-N})=2^{K-N}\]
 and $C^2(N-1,N-1)=0$. Together these claims show that 
$A^2$ increases moderately with $K$ while $Cx\gamma/\alpha$ decreases with $K$ to zero when $K=N-1$.  To conclude from these claims that the $K={\rm null}$ value of $F$ in (\ref{eq:Fcritical}) is larger than the  $K=N-1$ value just requires that the increase in $A^2$ is smaller than the $K={\rm null}$ value of $Cx\gamma/\alpha$.  To show that the quantity is monotonically decreasing requires some refinement of approaches outlined below in  Sect.~\ref{sect:rigorous}.

We close this section with a comment on the more general case of a set of columns $\mathcal{C}$ such that $\mathcal{C}$ has a single node at level $L$ in the first $2^{N-L}$
columns. We can assume that the columns at or above the node then form a matrix of the form $B_{N-L-1,K}\otimes \mathbf{e}_L$
where $2^{N/2}\mathbf{e}_L$ has ones in the first $2^L$ rows and zeros below.  Columns in  $\mathcal{C}$ below the node then are equal to  $\mathbf{e}_J$ for some $J=0,\dots, L-1$.
 If $N_J$ is the number of  columns in $\mathcal{C} $ with truncation length $2^J$ then  $N_J\leq 2^{N-J-1}$. 
The expression corresponding to (\ref{eq:bnk_norm}) in this setting replaces  $\gamma T_{N-1,K}\mathbf{z}$  by a term of the form
$T_L\mathbf{z}$ where $T_L \mathbf{z}=\sum_{J=0}^L \gamma_J \sqrt{N_J} \mathbf{e}_J$. This resulting expression
is more complicated than  (\ref{eq:bnk_norm2}) due to  terms with different truncation lengths. 
However, an analogue of Prop.~\ref{prop:Fcritical} can be established and in principal parallel methods to those just outlined can then be used
to establish  Conj.~\ref{claim:one_node}.

 \paragraph{Eigenvectors of $W_N^{\rm opt}$}
 The $\ell^1$-norms of optimizing input vectors of (\ref{eq:bnk_norm}) appear in the optimal values of (\ref{eq:bnk_norm2})  and its general form (\ref{eq:falphabeta}).
The inputs $\mathbf{x}$ and $\mathbf{y}$ are approximate eigenvectors of $W_{N-1}^{\rm opt}$ and $W_K^{\rm opt}$ respectively.
 The structure of $W_N^{\rm opt}$
allows for explicit calculation of $\ell^1$ norms of its eigenvectors in terms of the  eigenvalue and endpoints of the corresponding eigenvectors.
 The matrix  $W_N^{\rm opt}$ is symmetric. It eigenvectors have constant entries on indices $k$ such that $2^\ell\leq k<2^{\ell+1}$.  These \emph{level entries}
 determine eigenvectors of the matrix 
 \begin{eqnarray}\label{eq:level_matrix} 
 M_N&=&2^{-N/2} D_N^{1/2} C_N D_N^{1/2}\quad ((N+1)\times (N+1))\\
D_N&=&{\rm diag}(1,1,2,4,\dots, 2^{N-1}),\quad
C_N(i,j)=\begin{cases} 1&0\leq j \leq N-i \\ 0 & {\rm else}
\end{cases}
\end{eqnarray} 
 We refer to an eigenvector  $\mathbf{c}=[c_0,\dots, c_N]^T$  of $M_N$ as a \emph{level (eigen)-vector} of $W_N^{\rm opt}$. If   $\mathbf{c}$ has $M_N$-eigenvalue $\lambda$  then the entries of $\mathbf{c}$ satisfy
 \begin{equation}\label{eq:m-eigenvector}
 \lambda c_k=2^{q(k)} \left(c_0+\sum_{j=1}^{N-k} 2^{(j-1)/2} c_j\right);\quad  \begin{cases} q(k)= -N/2 & k=0\\
q(k)=(k-1-N)/2 & k=1,\dots, N\end{cases}
 \end{equation}

 We will assume in what follows that $\lambda=\lambda(N)$ refers to the norm-eigenvalue $\lambda(N)=\|W_N^{\rm opt} \|$. 
  The eigenvector $\mathbf{x}$ of $W_N^{\rm opt}$ corresponding to the \emph{level eigenvector} $\mathbf{c}$ of $M_N$ is 
 \[\mathbf{x}=[c_0,c_1,2^{-1/2} c_2, 2^{-1/2} c_2,\dots, \underbrace{2^{(1-N)/2}c_N,\dots, 2^{(1-N)/2}c_N}_{2^{N-1}}]^T\, .\]
 From (\ref{eq:m-eigenvector}) one then has
 \begin{equation}\label{eq:level_correspondence} \|\mathbf{x}\|_1=c_0+\sum_{j=1}^N 2^{(j-1)/2} c_j =2^{N/2} \lambda c_0\, .
 \end{equation}
 Similarly, if the vector $\mathbf{y}$ in (\ref{eq:bnk_norm}) corresponds to an eigenvector of $W_K^{\rm opt}$
 then its corresponding embedding in $\mathbb{R}^{2^N}$ satisfies $\|\mathbf{y}\|_1=c_0(K)+\sum_{j=1}^N 2^{(j-1)/2} c_j(K) =2^{K/2} \lambda(K) c_0(K)$.
 This suggests that, at least for small $K$, when the vectors $\mathbf{x}$ and $\mathbf{y}$ approximately optimize $\|W_{N-1}^{\rm opt} \mathbf{x}\|$ and 
 $\|W_K^{\rm opt }\mathbf{y}\|$ respectively,  the value $r=\|\mathbf{y}\|_1/\mathbf{x}\|_1$ in (\ref{eq:f1params}) should satisfy $r\approx 2^{(K-(N-1))/2} c_0(K)\lambda(K)/(c_0(N-1)\lambda(N-1))$. We argue (see Fig.~\ref{fig:optwalsh25_normvec_and_czero}) that $c_0(N)\lesssim 1/N^p$ for some $p\in (1,2)$. The values of $\lambda(N)$ correspond to Fig.~\ref{fig:optnorms}. That $\beta(N,K)\sim 2^{(K-N)/2}$ in  (\ref{eq:bnk_norm}) as predicted by this analysis  is confirmed numerically in Fig.~\ref{fig:alphabetaxyz}.

\begin{figure}[ht]
\centering
\noindent
 \includegraphics[width=12cm,height=4cm]{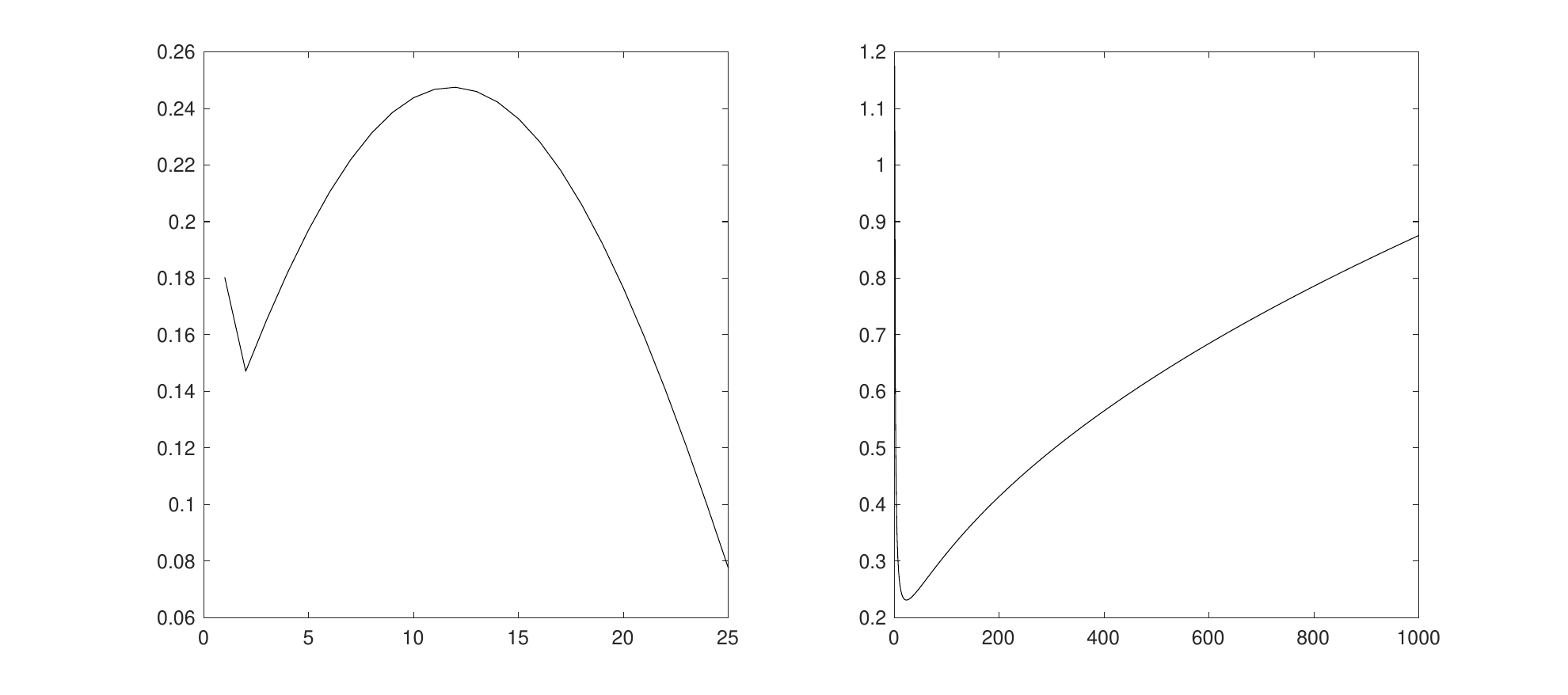}
\caption{\label{fig:optwalsh25_normvec_and_czero} Left: The level norm eigenvector $\mathbf{c}$ of the matrix $M$ in (\ref{eq:level_matrix}) for $N=24$
corresponding to $W_{24}^{\rm opt}$. Right: Plot of $1/(N c_0(N))$ for coefficient $c_0$ of level eigenvector corresponding to $W_N^{\rm opt}$ vs $N$ for $N=1,\dots, 1000$. It appears that $1/c_0$ grows   faster than $N$ but slower than $N^2$ for large $N$}
\end{figure}

When $W_N^{\rm opt}=B_{N-1,{\rm null}}$, the case in which there is no secondary branch,  $W_N^{\rm opt}\mathbf{u}=\lambda \mathbf{u}$ with unit level vector
$\mathbf{u}$.  Writing $\|W_N^{\rm opt}\mathbf{u}\|^2=\|B_{N-1,{\rm null}}(\mathbf{u})\|^2$ in  (\ref{eq:bnk_norm}) implies that the normalized left half $\mathbf{x}=\mathbf{u}_L/\alpha $ of $\mathbf{u}$ satisfies 
\begin{eqnarray*}
\|W_N^{\rm opt}\mathbf{u}\|^2 &=& \|W_{N-1}^{\rm opt}\mathbf{u}_L\|^2+\frac{1}{2}|c_N|^2+2\langle 2^{-N/2}\sum \mathbf{u}_L, 2^{(N-1)/2}c_N 2^{-N/2}\rangle
\notag \\
&=&\alpha^2\|W_{N-1}^{\rm opt}\mathbf{x}\|^2+\frac{1}{2}|c_N|^2
+2 2^{-1/2} c_N \alpha 2^{-N/2}\| \mathbf{x}\|_1\, 
\end{eqnarray*}
where  (\ref{eq:bnk_norm}) is optimized when $\alpha^2
=1-c_N^2$ and $c_N=\gamma$.

The vector $\mathbf{x}=\mathbf{u}_L/\alpha$
 is not itself an eigenvector of $W_{N-1}^{\rm opt}$  in the $K$-null case.
However, it is an approximate eigenvector in the following sense. Let $\mathbf{c}$ be the level eigenvector of $M_N$ corresponding to $\mathbf{u}$ as in (\ref{eq:level_correspondence}) (with $\mathbf{x}$ replaced by $\mathbf{u}$). Let $\mathbf{c}_L=[c_0,\dots, c_{N-1}]^T$. If $\mathbf{d}=M_{N-1}\mathbf{c}_L$ then one can check that \
\begin{equation}\label{eq:m-eigen-left}
d_k=\begin{cases} \sqrt{2}\lambda(N) c_0-c_0/(\sqrt{2}\lambda(N)), & k=0\\  \sqrt{2}\lambda(N) c_k-c_{N-k}/\sqrt{2}, & k=1,\dots, N-1 \end{cases}
\end{equation}
Suppose for the moment that the inside part $[c_1,\dots, c_{N-1}]\in\mathbb{R}^{N-1}$ of $\mathbf{c}$ is symmetric. Then we can write $[d_1,\dots,d_{N-1}]\approx (\sqrt{2}\lambda(N) -1/\sqrt{2})[c_1,\dots, c_N]$. If $\lambda(N)=(1+\sqrt{2}/2-\epsilon(N))$ where $\epsilon(N)$ is small then $(\sqrt{2}\lambda(N) -1/\sqrt{2})=(\sqrt{2}(1+\sqrt{2}/2-\epsilon_N)-1/\sqrt{2})
=1+\sqrt{2}/2-\sqrt{2}\epsilon(N)=\lambda(N)-(\sqrt{2}-1)\epsilon(N)$ since $\sqrt{2}-1/\sqrt{2}=\sqrt{2}/2$.  This would imply that $M_{N-1}\mathbf{c}_L\approx (\lambda(N)-(\sqrt{2}-1)\epsilon(N))\mathbf{c}_L+[(1/\sqrt{2}-1/(\sqrt{2}\lambda(N)))c_0,0,\dots,0]^T+(\mathbf{c}_L-\widetilde{\mathbf{c}}_L)/\sqrt{2}$  where $(\widetilde{\mathbf{c}}_L)_k=c_{N-k}$, $k=1,\dots, N$.
The norm of $\mathbf{c}_L-\widetilde{\mathbf{c}}_L$
is $0.1479$ when $N=24$ and $0.0074$ when $N=1000$.
The 
 level coefficients $\mathbf{c}(N-1,K)$ of $\mathbf{x}(N-1,K)$
are plotted for $N=24$ in Fig.~\ref{fig:nprime25_vecs}.

When a secondary branch of $B_{N-1,K}$ of width $2^K$ is introduced with small $K$, the number of coordinates with truncation length one is decreased from
$2^{N-1}$ to $2^{N-1}-2^K$ to accommodate the secondary branch. The $F_1$ analysis (\ref{eq:f1params}) suggests that $\beta\sim 2^{(K-N)/2}$ with a small proportionality constant in this case.  This is confirmed numerically in the log plots of Fig.~\ref{fig:alphabetaxyz} as functions of the parameter $K=0,\dots, 23$ with $N=24$.   

For small $K$  the expression $F_1$ also suggests that the parameter $\gamma$ should be approximately proportional to $C^2$, the number of minimal length vectors of  $B_{N-1,K}$.   Figure \ref{fig:alphabetaxyz} confirms that  the decrease in the value
 of $\gamma$ as $K$ increases is approximately proportional to the change in the number of minimal length vectors. Additionally,  the eigenvector analysis of $W_N^{\rm opt}$ above suggests that the proportionality constant should be the coefficient $c_N$ defined by (\ref{eq:m-eigenvector}). As a consequence,  the decrease in $\gamma$ exceeds  the increase in $\beta$ when $K$ is small.   
 Since $\alpha^2+\beta^2+\gamma^2=1$, this forces an increase in $\alpha$ with $K$ until $K$ is large enough that 
 the secondary branch begins to influence  the $F_2$ part of $F$ in (\ref{eq:falphabeta}).

\begin{figure*}[htp]
\centering
\begin{minipage}{.45\textwidth}
\includegraphics[width=6cm,height=4cm]{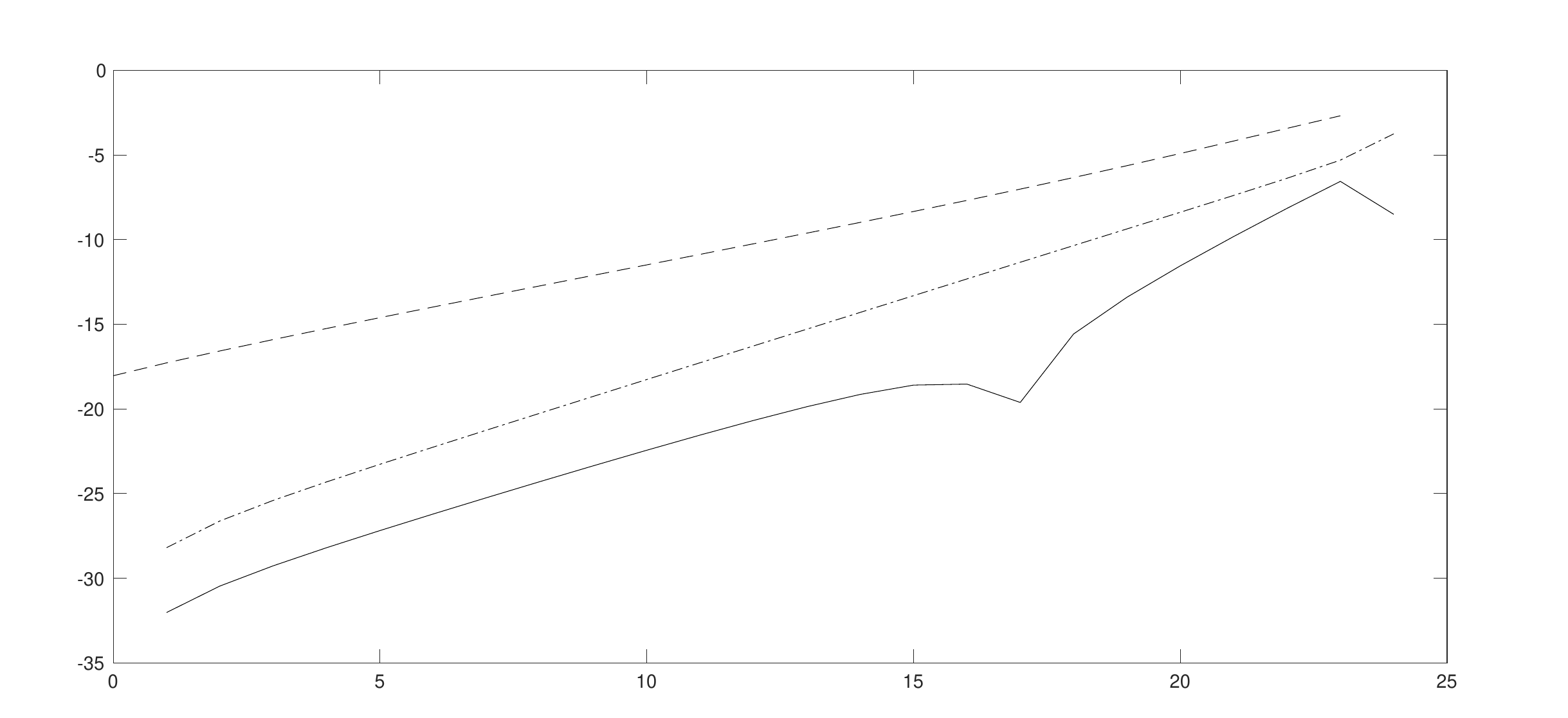}%
\caption{$\log_2$ plots of quantities $|\alpha(N,K)-\alpha(N,0)|$ (solid), $\beta(N,K) $ (dashed) and $\gamma(N,0)-\gamma(N,K)$ for $N=24$ and $K=0,\dots, 23$}
\label{fig:alphabetaxyz} 
\end{minipage}%
\hfill
\begin{minipage}{.45\textwidth}
\includegraphics[width=6cm,height=4cm]{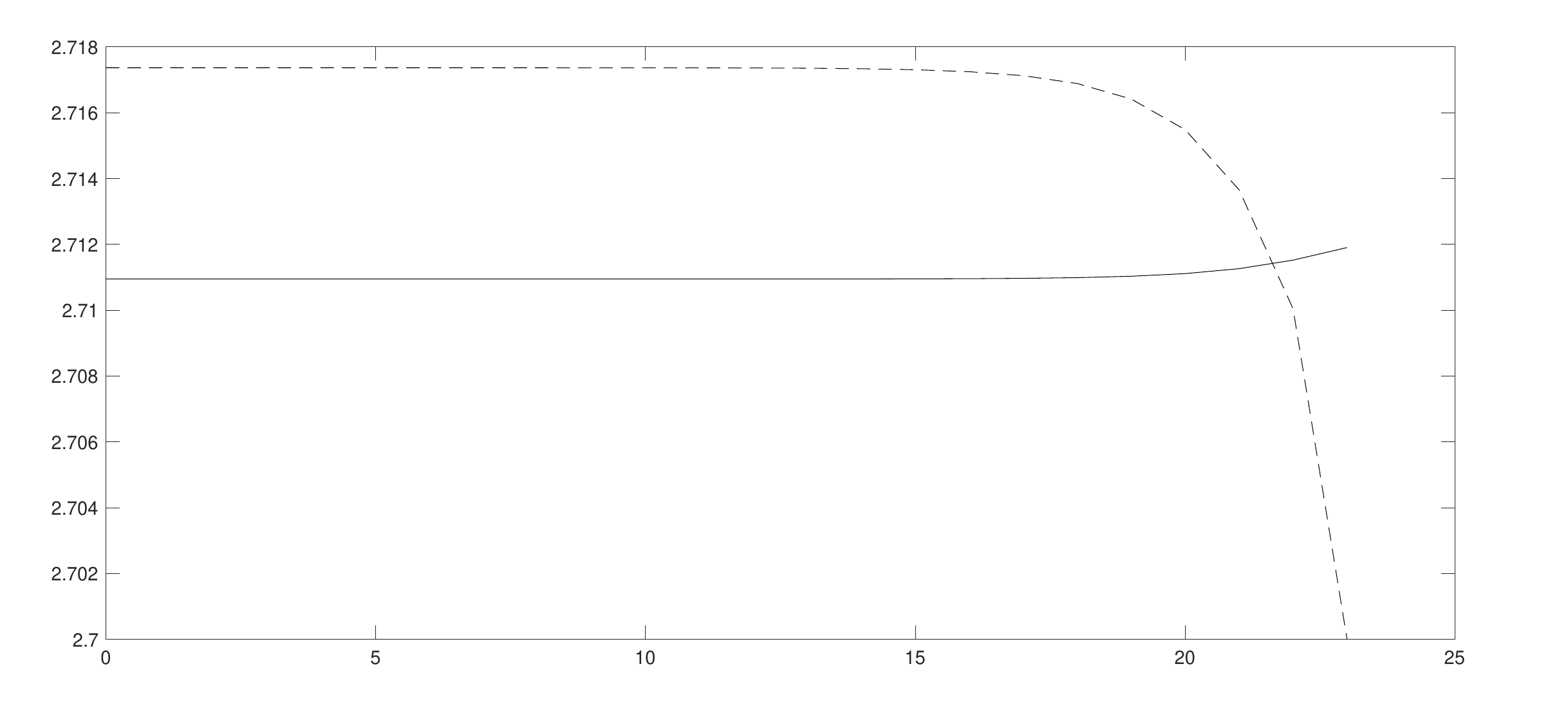}%
\caption{
Quantities $A^2(N,K)$ (solid)  and $Cx\frac{\gamma}{\alpha}+{\rm const}$ (dashed) appearing in (\ref{eq:Fcritical})
}
\label{fig:Fsumterms}
\end{minipage}%
\end{figure*}

\section{Towards a rigorous proof of optimality of $W_N^{\rm opt}$\label{sect:rigorous}}
To use Prop.~\ref{prop:Fcritical} to prove Conj.~\ref{claim:bnk_opt} requires estimates on $\alpha,\gamma,x=2^{-N/2}\|\mathbf{x}\|_1$
and $A^2=\|W_{N-1}^{\rm opt}\mathbf{x}\|_2^2$ when $\mathbf{x}$ is the left half of a vector $\mathbf{u}$ that optimizes $\|B_{N-1,K}(\mathbf{u})\|$.  The estimates need to quantify the subclaims listed below Prop.~\ref{prop:Fcritical} with sufficient precision  to confirm that the quantity $A^2+C x\frac{\gamma}{\alpha}$ in Prop.~\ref{prop:Fcritical} is decreasing in $K$
 when the quantities are considered as functions of $K$. That is, to confirm that $C x\frac{\gamma}{\alpha}$ is decreasing faster than $A^2$ is increasing, as is confirmed numerically (see Fig.~\ref{fig:Fsumterms}). As was indicated in the discussion of the subclaims, Figs.~\ref{fig:alphabetaxyz}--\ref{fig:nprime25_vecs} confirm the following numerically.
\begin{enumerate}
\item $\mathbf{x}$ evolves monotonically between being the normalized  first $2^{N-1}$ coordinates of the norm eigenvector of $W_{N}^{\rm opt}$ ($K$ null) to being the norm eigenvector of $W_{N-1}^{\rm opt}$ ($K=N-1$). This means that for each $k=0,\dots,2^{N-1}-1$, $x_k(K)$ converges monotically to $x_k(N-1)$ as $K$ increases from zero to $N-1$. Also, $\|\mathbf{x}\|_1$ is decreasing in $K$.
\item $\gamma(N,K)-\gamma(N,K+1)$ is approximately proportional to $2^{K-N}$.
\end{enumerate}
Equation \ref{eq:level_correspondence} gives an estimate of $\|\mathbf{x}\|_1$ when $\mathbf{x}$ is an eigenvector of $W_{N}^{\rm opt}$ provided one has accurate estimates of $c_0$ and $\lambda$ in (\ref{eq:m-eigenvector}).  Besides estimates of the level coefficients $c_0$ and $c_N$, one also needs precise quantification of multiplicative  factors in the estimates $\beta(N,K)\sim 2^{(K-N)/2}$ and $\gamma(N,K+1)-\gamma(N,K)\sim 2^{(K-N)}$, and  of the error between $\mathbf{x}(N,K)$ and $\mathbf{x}(N,K+1)$, the left halves of successive singular vectors of  $B_{N-1,K}$.

\begin{figure}[ht]
\begin{center}
\noindent
 \includegraphics[width=12cm,height=4cm]{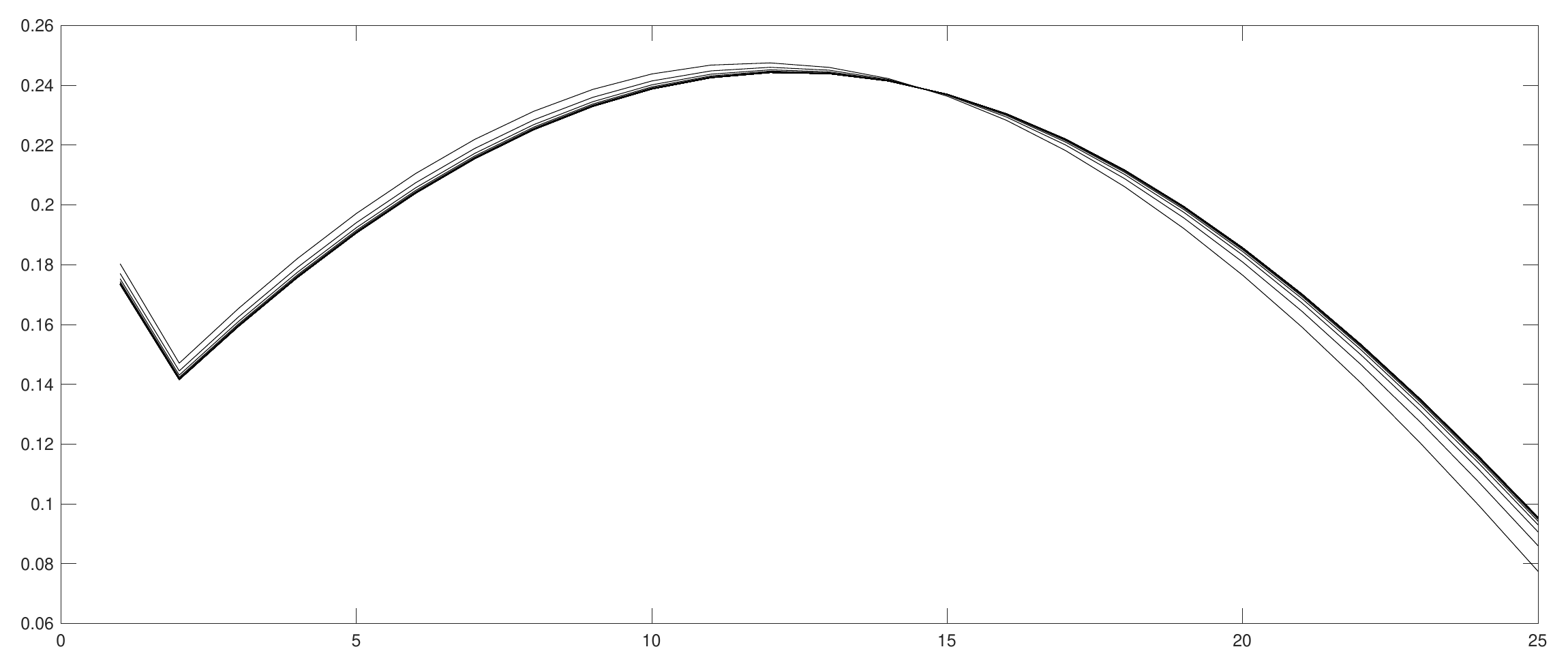}
\caption{\label{fig:nprime25_vecs} Vectors $\mathbf{c}(N-1,K)$ for $N=25$ and $K=0,\dots 24$ where $\mathbf{c}$ are the level coefficients of the vector $\mathbf{x}(N-1,K)$ of coefficients of the  primary branch of the norm singular vector of  $B_{N-1,K}$. The solid-looking part comes from the \emph{small} values of $K$ where the  coefficients $c_k(N-1,K)$ evolve slowly, most of the change occurring in the values $K=20,\dots, 24$. For the matrix $M$ corresponding to $N-1=24$ the norm of $M\mathbf{c}(24,0)$ is $1.6464\dots$ while
that of  $M\mathbf{c}(24,24)$  (the norm eigenvector of $W_{24}^{\rm opt}$) is $1.6468\dots$}
\end{center}
\end{figure}

Recall that as $K$ ranges from \emph{null} to $N-1$, the $\mathbf{x}$-part of an optimizer for $B_{N-1,K}$ ranges from being the first $2^{N-1}$ coordinates of 
an eigenvector of $W_N^{\rm opt}$ 
to being an eigenvector of $W_{N-1}^{\rm opt}$. 
In the latter case one has $\|\mathbf{x}(N-1,N-1)\|_1=2^{N/2} \lambda(N-1) c_0(N-1)$ whereas in the former case, by (\ref{eq:m-eigenvector}), one has  $\|\mathbf{x}(N-1,{\rm null})\|_1=2^{N/2} \lambda(N) c_1(N)/\sqrt{1-c_N^2(N)}$ 
where the denominator reflects that $\mathbf{x}(N-1,{\rm null})$ is the $\ell^2$-renormalization of the left half of the norm eigenvector of $W_N^{\rm opt}$ and $\mathbf{c}$ is its 
level vector. 
Values $\|\mathbf{x}(N-1,K)\|_1$ are difficult to quantify explicitly when $K<N-1$, but can be done  in terms of  the singular value decomposition of $B_{N-1,K}$.
Coordinates of $\mathbf{x}(N-1,K)$ can be shown to evolve monotonically between those of $\mathbf{x}(N-1,{\rm null})$ and $\mathbf{x}(N-1,N-1)$.

The eigenvector equations (\ref{eq:m-eigenvector}) can be re-expressed as
\begin{eqnarray} \label{eq:m-eigenvector2}
c_{1}&=& c_0- \mu  c_{N} ;\quad \mu=1/(\sqrt{2} \lambda(N)) \notag\\
c_{k+1}&=&\sqrt{2} c_k- \mu  c_{N-k} ;\quad k=1,\dots, N-1 \, .
\end{eqnarray}
The first few iterates give $c_N=\mu c_0$; $c_1=(1-\mu^2)c_0$; $c_{N-1}=2^{-1/2}\mu (2-\mu^2)c_0$, $c_2=[\sqrt{2} (1-\mu^2)- 2^{-1/2}\mu^2 (2-\mu^2)]c_0$ and so on.
Iterating leads to expressions of $c_k$ ($k=1,\dots, N$)   as polynomials in $\mu$, 
\begin{equation} \label{eq:m-eigenvector-coefficient-expansion}
c_k=\sum_{\ell=0}^k c_{k,\ell} \mu^{2\ell};\quad
c_{N-k}=\sum_{\ell=0}^k d_{k,\ell} \mu^{2\ell+1}
\end{equation}
whose coefficients $c_{k,\ell}$ and $d_{k,\ell}$ satisfy
\begin{eqnarray}  \label{eq:m-eigenvector-coefficient-expansion-recursion}
c_{k+1,\ell}&=&\sqrt{2} c_{k,\ell}-\, d_{k,\ell-1}, (k>0) \notag \\
 d_{k+1,\ell}&=& 2^{-1/2}( d_{k,\ell}+c_{k+1,\ell}), (k<N-1) \, .
 \end{eqnarray}
In theory these lead to formulas expressing each $c_k$ as $c_0$ times a polynomial of degree $2k$ in $\mu$ whose coefficients are sums of powers of $\sqrt{2}$.
Closed forms would then allow for explicit calculation of $c_0$ in terms of $\mu$ using $\|\mathbf{c}\|_2=1$ and, in turn, exact expressions for $\alpha,\gamma$ in the case $K=0$ (and $K=N-1$).

The eigenvector analysis of $W_N^{\rm opt}$ is possible because $W_N^{\rm opt}$ and its level matrix $M_N$ are symmetric.
The matrix $B_{N-1,K}$ is not symmetric. Nevertheless, its singular vectors are constant on indices corresponding to columns of
equal truncation length, allowing for such vectors to be expressed in terms of an analogous level matrix (of size $(N+K+2)\times (N+K+2)$ to decouple the two mutually orthogonal branches). This level matrix also is not symmetric and  the parameters to optimize $B_{N-1,K}$ in (\ref{eq:bnk_norm2}) cannot be determined exactly for general
$K$ in the same way as above for $K=0,N-1$.
Nevertheless, the eigen-analysis of $W_N^{\rm opt}$ can be adapted, with some small quantifiable errors,
 to a change in the number of minimal-length columns from
$2^{N-1}$ to $2^{N-1}-2^K$. 
This, together with exact formulas for level coefficients of norm singular vectors, should  provide sufficient precision in estimating the optimizing parameters
to conclude that the increase of $A^2$ in  $K$ in (\ref{eq:Fcritical}) is more than compensated by a decrease in $Cx\frac{\gamma}{\alpha}$
as a function of $K$, resulting in a net decrease in $F$ as a function of $K$ evaluated at parameters that optimize $B_{N-1,K}$.

\section{Discussion: Non-dyadic truncations\label{sect:discussion}}

Figure \ref{fig:biopt33_trim} shows a matrix that we denote $B_{3,3}^{\rm trim}$ that is obtained from $B_{3,3}$ (see (\ref{eq:bnk_norm}))  by trimming any columns whose
bottom nonzero entries are equal to $-1$, setting those entries to zero so the new bottom entry is equal to one.  The matrix on the left in Fig.~\ref{fig:biopt33_trim} has norm equal to $1.31\dots$ which is also equal to the norm of $W_3^{\rm opt}$. The norm
of $B_{3,3}^{\rm trim}$ is slightly larger,  $1.361\dots$ but smaller than the norm of the  standard truncation of the same dimensions, namely the $16\times 16$ matrix $W_4^{\rm opt}$ which is $1.366\dots$.  It turns out, however, that $\|W_{N}^{\rm opt}\|<\|B_{N-1,N-1}^{\rm trim}\|$, where both matrices have size $2^N\times 2^N$, as soon
as $N\geq 7$, as can be verified numerically. For example, in the $128\times 128$ case, $\|W_7^{\rm opt}\|=1.4739\dots$ whereas $\|B_{6,6}^{\rm trim}\|=1.4746\dots$.  This is related to their \emph{total correlations} 
where the total correlation of a matrix $A\in\mathbb{R}^{m\times n}$ with columns $\mathbf{a}_j$ is $T_A=\sum_i\sum_j\langle \mathbf{a}_i,\, \mathbf{a}_j\rangle
=\mathbf{1}^T A^T A\mathbf{1}$ where $\mathbf{1}\in\mathbb{R}^n$ is the vector whose entries are one in each coordinate.
The total correlation  of $W_7^{\rm opt}$ is $T_{W_7^{\rm opt}}=191.50\dots$, larger than 
 $T_{B_{6,6}}=191.00\dots$, but $T_{W_7^{\rm opt}}<T_{B_{6,6}^{\rm trim}}=201.67\dots$.  While the trimming diminishes
correlations among columns on the right, that loss is more than compensated by new correlation added between pairs of columns on the left and right sides.  A gain in total correlation of course does not imply a gain in norm and, in fact,
 the norm gain is only marginal. One can check for example that $\|W_8^{\rm opt}\|=1.498\dots >\|B_{6,6}^{\rm trim}\|$. We have verified numerically that 
$\|W_{N+1}^{\rm opt}\|>\|B_{N-1,N-1}^{\rm trim}\|$ for computable $N$.  
For larger $N$, one can obtain non-dyadic truncations with  larger norm by introducing further \emph{trimmed bifurcations}.
Again, the gain is marginal. Numerical
investigation suggests that if $W_{N,\Phi}$ is a $2^{N}\times 2^{N}$ (trimmed) TWH matrix that has $P$ nodes then $\|W_{N+P+1}^{\rm opt}\|>\|W_{N,\Phi}\|$.  
If one has a \emph{dilation method} to  improve norm successively by doubling  dimension and replacing blocks of columns {having } a single bifurcation by a corresponding set of columns of \emph{standard type} forming a single branch, then one should be able to conclude a strong form of  Conj.~\ref{conj:uniform_bound_general}, namely that 
$\|W_\Phi\|\leq 1+\frac{\sqrt{2}}{2}$ for any truncation $\Phi$.  
The weaker form of  Conj.~\ref{conj:uniform_bound_general} as stated is based on a less delicate approach. {Measure-theoretic arguments would also be needed } to allow for truncation maps $\Phi$ in Conj.~\ref{conj:uniform_bound_general} that do not necessarily belong to $\mathcal{D}_N$ for some $N$.

\begin{figure}[ht]
\begin{center}
\noindent
 \includegraphics[width=12cm,height=4cm]{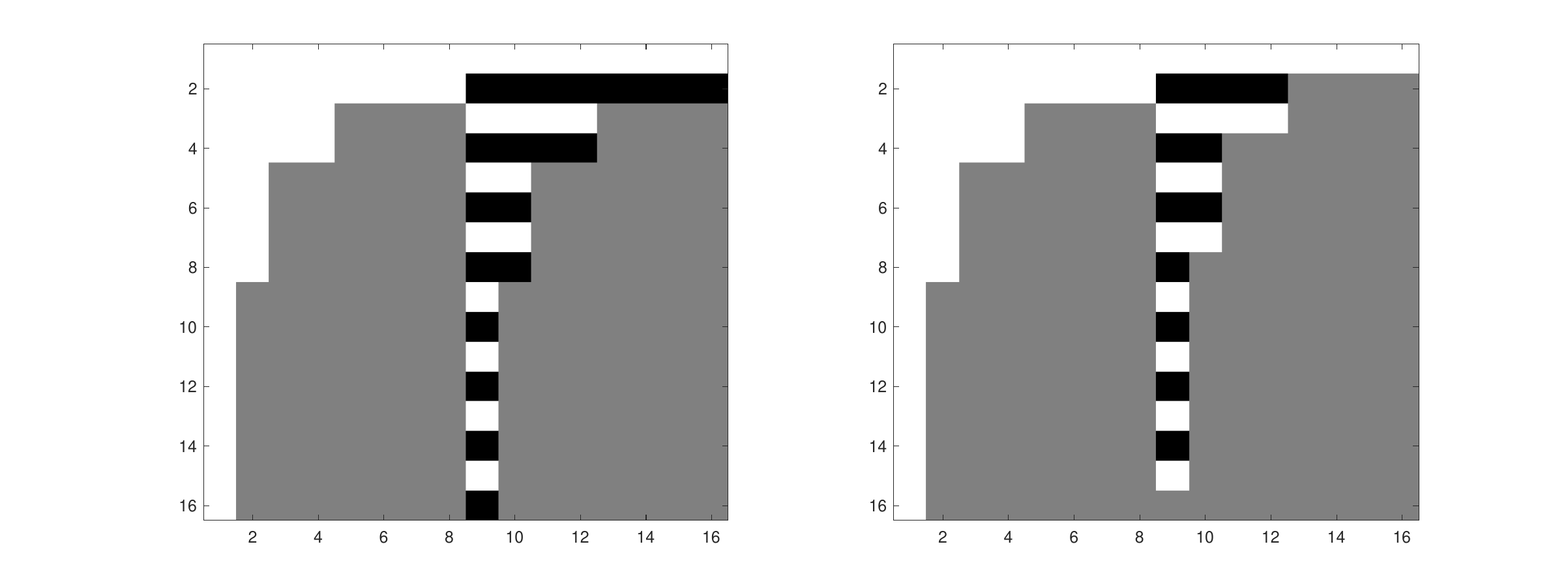}
\caption{\label{fig:biopt33_trim} Left: $16\times 16$ matrix $B_{3,3}$. Right: Matrix $B_{3,3}^{\rm trim}$ obtained from $B_{3,3}$ by trimming the bottom
entries of $B_{3,3}$ that are equal to $-1$}
\end{center}
\end{figure}

\appendix

\section{Appendix: Proof of  Prop.~\ref{prop:Fcritical}\label{appendix:proof_Fcritical}}

 \begin{proof}[Proof of Prop.~\ref{prop:Fcritical}]
 For 
 \[F(\alpha,\beta;x,y)=\alpha^2 A^2+\beta^2 B^2+\gamma^2 C^2 +2\gamma C(\alpha x+\beta y)
\]
 we have
 \[\frac{\partial F}{\partial \alpha}=2\alpha (A^2-C^2)+2\gamma C x-2C\frac{\alpha}{\gamma}(\alpha x+\beta y)\,\,{\rm and}
 \]
  \[\frac{\partial F}{\partial \beta}=2\beta (B^2-C^2)+2\gamma C y-2C\frac{\beta}{\gamma}(\alpha x+\beta y)\, .
 \]
Vanishing of both terms yields
  \[0=2\alpha (A^2-C^2)+2\gamma C x-2C\frac{\alpha}{\gamma}(\alpha x+\beta y)
 \]
\[ 2\alpha A^2+2\gamma Cx=2\alpha C^2+2C\frac{\alpha}{\gamma}(\alpha x+\beta y)
\]
 \begin{equation}\label{eq:dfdalpha=0} A^2+\frac{\gamma}{\alpha} Cx= C^2+\frac{C}{\gamma}(\alpha x+\beta y)\, .
 \end{equation}
 Similarly
 \begin{equation}\label{eq:dfdbeta=0} B^2+\frac{\gamma}{\beta} Cy= C^2+\frac{C}{\gamma}(\alpha x+\beta y)\, .
 \end{equation}
 These expressions allow us to eliminate $B^2$ and $y$.   Specifically, by (\ref{eq:dfdalpha=0}) we get
 \begin{equation}\label{eq:cbetay} \frac{C}{\gamma}(\beta y)= A^2-C^2+ Cx\left(\frac{\gamma}{\alpha} -\frac{\alpha}{\gamma}\right)
 \end{equation}
 and by (\ref{eq:dfdbeta=0})
 \begin{equation}\label{eq:bsquared} B^2= C^2+C\left(\frac{\alpha}{\gamma} x+\left(\frac{\beta}{\gamma}-\frac{\gamma}{\beta}\right) y\right)\, .
 \end{equation}
Using  (\ref{eq:bsquared}) to substitute for $B^2$  into the expression for $F$ yields
 \begin{multline*}F(\alpha,\beta;x,y)=\alpha^2 A^2+\beta^2 \left(C^2+C\left(\frac{\alpha}{\gamma} x+\left(\frac{\beta}{\gamma}-\frac{\gamma}{\beta}\right) y\right)\right)
 +\gamma^2 C^2 +2\gamma C(\alpha x+\beta y)\\
 =\alpha^2 A^2+\beta^2 \left(C^2+C\left(\frac{\alpha}{\gamma} x+\left(\frac{\beta}{\gamma}-\frac{\gamma}{\beta}\right) y\right)\right)
 +\gamma^2 C^2 +2\gamma^2 C\left(\frac{\alpha}{\gamma} x+\frac{\beta}{\gamma} y\right)\\
  =\alpha^2 A^2+(\beta^2+\gamma^2)C^2+( \beta^2+2\gamma^2)C\frac{\alpha}{\gamma} x+
  \beta^2 C\left(\frac{\beta}{\gamma}\left(1-\frac{\gamma^2}{\beta^2}\right) y\right)
+2\gamma^2 C\left(\frac{\beta}{\gamma} y\right)\\
\end{multline*}
Applying (\ref{eq:cbetay}) then gives
 \begin{multline*}F(\alpha,\beta;x,y)
  =\alpha^2 A^2+(\beta^2+\gamma^2)C^2+( \beta^2+2\gamma^2)C\frac{\alpha}{\gamma} x+C\frac{\beta}{\gamma}y(\beta^2+\gamma^2)\\
  =\alpha^2 A^2+(\beta^2+\gamma^2)C^2+( \beta^2+2\gamma^2)C\frac{\alpha}{\gamma} x
  +\left(A^2-C^2+ Cx\left(\frac{\gamma}{\alpha} -\frac{\alpha}{\gamma}\right)\right)(\beta^2+\gamma^2)\\
    =(\alpha^2+\beta^2+\gamma^2) A^2+( \beta^2+2\gamma^2)C\frac{\alpha}{\gamma} x
  +\left( Cx\left(\frac{\gamma}{\alpha} -\frac{\alpha}{\gamma}\right)\right)(\beta^2+\gamma^2)\\
      =A^2+ \beta^2 C\frac{\alpha}{\gamma} x+2\gamma^2 C\frac{\alpha}{\gamma} x
  +\left( Cx\left(\frac{\gamma}{\alpha} -\frac{\alpha}{\gamma}\right)\right)(\beta^2)
    +\left( Cx\left(\frac{\gamma}{\alpha} -\frac{\alpha}{\gamma}\right)\right)(\gamma^2)\\
       =A^2+2\gamma^2 C\frac{\alpha}{\gamma} x
  +\left( Cx\left(\frac{\gamma}{\alpha} \right)\right)(\beta^2)
    +\left( Cx\left(\frac{\gamma}{\alpha} -\frac{\alpha}{\gamma}\right)\right)(\gamma^2)\\
           =A^2+\gamma^2 C\frac{\alpha}{\gamma} x
  +Cx \frac{\gamma}{\alpha}(\beta^2+\gamma^2)\\
             =A^2+C\alpha^2\frac{\gamma}{\alpha} x
  +Cx \frac{\gamma}{\alpha}(\beta^2+\gamma^2) =A^2+C \frac{\gamma}{\alpha} x\, .
\end{multline*}
The other identity in (\ref{eq:Fcritical}) follows by combining (\ref{eq:dfdalpha=0}) and (\ref{eq:dfdbeta=0}).
 \end{proof}
 
%


\end{document}